\newtheorem{lemmaArt}{Lemma}[section]
\newtheorem{theoArt}[lemmaArt]{Theorem}
\newtheorem{propArt}[lemmaArt]{Proposition}
\newtheorem{conjArt}[lemmaArt]{Conjecture}
\theoremstyle{definition}
\newtheorem{definArt}[lemmaArt]{Definition}
\newtheorem{exampleArt}[lemmaArt]{Example}
\newenvironment{eq}{\begin{equation}}{\end{equation}}
\newcommand{\rf}[1]{(\ref{#1})}
\newcommand{\Char}{\mathop{\rm char}}
\newcommand{\id}[1]{{{\rm id}\{{#1}\}}}
\newcommand{\FF}{\mathbb{F}}
\newcommand{\ZZ}{\mathbb{Z}}
\newcommand{\NN}{\mathbb{N}}
\newcommand{\tq}{ \ | \ }
\newcommand{\st}{ \ | \ }
\newcommand{\X}{\langle X\rangle}
\newcommand{\FX}{\FF\langle X\rangle}
\newcommand{\Fxy}{\FF\langle x,y\rangle}
\newcommand{\A}{\mathsf{A}}
\newcommand{\V}{\mathsf{V}}
\newcommand{\I}{\mathcal{I}}
\newcommand{\B}{\mathsf{B}}
\newcommand{\W}{\mathsf{W}}
\newcommand{\algA}{\mathcal{A}}
\newcommand{\algB}{\mathcal{B}}
\newcommand{\algV}{\mathcal{V}}
\newcommand{\LA}{\langle}
\newcommand{\RA}{\rangle}
\newcommand{\eqPI}{\sim_{\rm PI}}
\newcommand{\si}{\sigma}
\newcommand{\al}{\alpha}
\newcommand{\be}{\beta}
\newcommand{\ga}{\gamma}
\newcommand{\de}{\delta}
\newcommand{\De}{\Delta}
\newcommand{\Ga}{\Gamma}
\newcommand{\Id}[1]{{{\rm Id}({#1})}}
\newcommand{\IdF}[1]{{{\rm Id}_{\FF}({#1})}}
\newcommand{\un}[1]{{\underline{#1}} }
\newcommand{\mdeg}{\mathop{\rm mdeg}}
\newcommand{\mw}{\mathop{\rm mw}}      
\newcommand{\bw}{\mathop{\rm bw}}      
\newcommand{\Sym}{\mathcal{S}} 
\newcommand{\lin}{\mathop{\rm lin}}
\newcommand{\St}{{\rm St}}
\title{
    Weak polynomial identities of small degree for the Weyl algebra
    }
\author{
    Artem Lopatin, Carlos Arturo Rodriguez Palma and Liming Tang
    }
\abstract{%
    In this paper we investigate weak polynomial identities for the Weyl algebra $\mathsf{A}_1$ over an infinite field of arbitrary characteristic. Namely, we describe weak polynomial identities of the minimal degree, which is three, and of degrees 4 and 5. We also describe  weak polynomial identities in two variables.
    }
\keywords{
    Polynomial identities, Weak polynomial identities, Matrix identities, Weyl algebra,  Positive characteristic.
    }
\begin{document}





\section{Introduction}\label{section_intro}

Assume that $\FF$ is an infinite field of arbitrary characteristic $p=\Char\FF\geq0$. All vector spaces and algebras are over $\FF$  and all algebras are associative, unless stated otherwise. We write $\FF\LA x_1,\ldots,x_n\RA$ for the free unital $\FF$-algebra with free generators  $x_1,\ldots,x_n$. In case the free generators are $x_1,x_2,\dotsc$ the corresponding free algebra is denoted by  $\FF\LA X\RA$.

A polynomial identity for a unital $\FF$-algebra $\algA$ is an element $f(x_1,\ldots,x_m)$ of $\FF\LA X\RA$ such that $f(a_1,\ldots,a_m)=0$ in $\algA$ for all $a_1,\ldots,a_m\in \algA$. The set $\IdF{\algA}=\Id{\algA}$ of all polynomial identities for $\algA$ is a T-ideal, i.e.,  $\Id{\algA}$ is an ideal of $\FX$ such that $\phi(\Id{\algA})\subset \Id{\algA}$ for every endomorphism  $\phi$ of $\FX$. Given an $\FF$-subspace $\algV\subset \algA$,  we write $\IdF{\algV}=\Id{\algV}$ for the ideal of all polynomial identities for $\algV$.  Note that $\Id{\algV}$ is an {\it L-ideal} (or {\it weak T-ideal}), i.e.,  $\phi(\Id{\algV})\subset \Id{\algV}$ for every linear endomorphism  $\phi$ of $\FX$, but $\Id{\algV}$ is not a T-ideal in general. We say that a space $\algV$ generates the algebra $\algA$, if any element of $\algA$ can be written as a non-commutative polynomial without free term in some elements of $\algV$. If a space $\algV$ generates the algebra $\algA$, then the polynomial identities for $\algV$ are called {\it weak} polynomial identities for the pair ($\algA,\algV$) and we denote $\Id{\algV}=\Id{\algA,\algV}$.

Weak polynomial identities were introduced in 1973 by Razmyslov~\cite{Razmyslov_1973_AL,Razmyslov_1973_USSR} (see also book~\cite{Razmyslov_book}), who applied them to study polynomial identities of matrices. 
Razmyslov~\cite{Razmyslov_1973_AL}, Drensky~\cite{Drensky_1997} and  Koshlukov~\cite{Koshlukov_1997} described weak polynomial identities for the pair $(M_2,sl_2)$ over a field of an arbitrary characteristic, where $sl_2$ is the space of all traceless matrices. Weak polynomial identities of small degrees for the pair $(M_3, sl_3)$ were studies by Drensky, Rashkova~\cite{Drensky_Rashkova_1993} and by  Blachar, Matzri, Rowen, Vishne~\cite{Blachar_Matzri_Rowen_Vishne_2021}. 

For $p=0$ weak polynomial identities for the pair $(M_2,H_2)$ were described by Drensky~\cite{Drensky_1986}, where $H_n$ stands for the space of all symmetric $n\times n$ matrices. Minimal weak polynomial identities for the pair $(M_n,H_n)$ for an arbitrary $n>1$ were described by Ma and Racine~\cite{Ma_Racine_1990} in case the characteristic of $\FF$ satisfies certain restrictions. 

Weak polynomial identities were also considered in~\cite{Drensky_1987,  Isaev_Kislitsin_2013, Isaev_Kislitsin_2017, Kislitsin_2022, Koshlukov_1998}, etc. More details on weak polynomial identities can be found in a recent survey by Drensky~\cite{Drensky_2021}.

The Weyl algebra $\A_1$ is generated by $\V=\FF\text{-}\mathrm{span}\{ x,\; y\}$. In this paper we consider weak polynomial identities for the pair $(\A_1,\V)$. 
In Lemma~\ref{lemma_PIs} we show that the following elements of $\FX$ are weak polynomial identities for $(\A_1,\V)$:
\begin{enumerate}
\item[$\bullet$] $\Ga_m(x_1,\ldots,x_m)=[[x_1,x_2],x_3 \cdots x_m]$ for $m\geq3$,

\item[$\bullet$] $\St_3(x_1,x_2,x_3)=x_1 [x_2, x_3] -  x_2 [x_1, x_3] +  x_3 [x_1, x_2]$,

\item[$\bullet$] $T_4(x_1,\ldots,x_4)=[x_1,x_2] [x_3,x_4] - [x_1,x_3] [x_2,x_4] + [x_2,x_3][x_1,x_4]$,
\end{enumerate}

\noindent{}Denote by $\I$ the ideal of $\FX$ generated by 
$$\Ga_3(x_i, x_j, x_k),\;\; \St_3(x_i, x_j, x_k),\;\; T_4(x_i, x_j, x_k, x_l)$$ 
for all $i,j,k,l>0$. In other words, $\I$ is the L-ideal generated by $\Ga_3$, $\St_3$, and $T_4$. Given $f_1,f_2\in\FX$, we say that $f_1$ and $f_2$ are {\it equivalent} and write $f_1\equiv f_2$ in case $f_1-f_2\in \I$. 

In Theorem~\ref{theo_deg3} we describe weak polynomial identities for $(\A_1,\V)$ of the minimal degree, which is three. In Theorem~\ref{theo_2var} we show that every weak polynomial identity for $(\A_1,\V)$ in two variables lies in $\I$. Moreover, all weak polynomial identities for $(\A_1,\V)$ of degrees 4 and 5 belong to $\I$ by Propositions~\ref{prop_deg4} and~\ref{prop_deg5}. Therefore, we formulate the following conjecture:

\begin{conjArt}\label{conj1}
The ideal of all weak polynomial identities for the pair $(\A_1,\V)$ is equal to $\I$. 
\end{conjArt}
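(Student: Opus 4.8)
The plan is to reduce Conjecture~\ref{conj1} to a question about an explicit commutative ring. By Lemma~\ref{lemma_PIs} we have $\I\subseteq\Id{\A_1,\V}$, so only the reverse inclusion is in question, and since $\FF$ is infinite both $\I$ and $\Id{\A_1,\V}$ are multihomogeneous; hence it is enough to show that the natural surjection $\FX/\I\twoheadrightarrow\FX/\Id{\A_1,\V}$ is injective. Filter both quotients by the powers $C\supseteq C^2\supseteq\cdots$ of the commutator ideal $C=[\FX,\FX]\FX$. Because $\Ga_3\in\I$, the image of $C$ in $\FX/\I$ is central; in each multidegree this filtration is finite, hence Hausdorff, so it suffices to prove injectivity on the associated graded rings, and I would first try to compute $\mathrm{gr}_C(\FX/\I)$ outright.

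Since commutators are central, $\mathrm{gr}_C(\FX/\I)$ is commutative, and the identities $\St_3$ and $T_4$ become, respectively, the ``Koszul'' relations $K_{abc}=x_a\xi_{bc}-x_b\xi_{ac}+x_c\xi_{ab}$ and the ``Pl\"ucker'' relations $P_{abcd}=\xi_{ab}\xi_{cd}-\xi_{ac}\xi_{bd}+\xi_{ad}\xi_{bc}$, where $\xi_{ij}$ is the class of $[x_i,x_j]$ (so $\xi_{ij}=-\xi_{ji}$, $\xi_{ii}=0$). This gives a surjection of graded rings $R:=\FF[x_1,x_2,\dots][\xi_{ij}]/(K,P)\twoheadrightarrow\mathrm{gr}_C(\FX/\I)$, with $x_i$ in degree $0$ and $\xi_{ij}$ in degree $1$. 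Showing it is an isomorphism — equivalently, that every element of $\FX$ is equivalent modulo $\I$ to a combination of products $m\cdot[x_{i_1},x_{j_1}]\cdots[x_{i_r},x_{j_r}]$ subject only to the $K$- and $P$-reductions, with no collapse — is the place where a rewriting (Gr\"obner--Shirshov type) argument in $\FX$, together with an explicit monomial basis of $R$, is required; the basis count is the combinatorial core of the argument.

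For the embedding into $\A_1$, use the Bernstein filtration $F_k=\Span\{x^iy^j\st i+j\le k\}$, for which $\mathrm{gr}^B\A_1\cong\FF[\bar x,\bar y]$ is a commutative polynomial ring and $[F_k,F_l]\subseteq F_{k+l-2}$ in any characteristic. For $v_i=\al_i x+\be_i y\in\V$ one has $[v_i,v_j]=\mu_{ij}:=\al_i\be_j-\al_j\be_i\in\FF$, so evaluating $x_i\mapsto v_i$ and passing to top symbols carries $m\cdot[x_{i_1},x_{j_1}]\cdots[x_{i_r},x_{j_r}]$ of total degree $d$ into $F_{d-2r}$, with symbol $\prod_i(\al_i\bar x+\be_i\bar y)^{e_i}\cdot\mu_{i_1j_1}\cdots\mu_{i_rj_r}$, where $e_i=\deg_{x_i}m$. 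Degree by degree this induces a homomorphism $R\to\FF[\bar x,\bar y][\al_i,\be_i\st i\ge1]$, $x_i\mapsto\al_i\bar x+\be_i\bar y$, $\xi_{ij}\mapsto\mu_{ij}$, which kills $K$ and $P$ (this is exactly why $\St_3$ and $T_4$ are weak identities). Comparing Bernstein orders, and using that $\FF$ is infinite, one sees: if this homomorphism is injective, then so are $R\twoheadrightarrow\mathrm{gr}_C(\FX/\I)\twoheadrightarrow\mathrm{gr}_C(\FX/\Id{\A_1,\V})$, hence these are isomorphisms, and therefore $\FX/\I\to\FX/\Id{\A_1,\V}$ is injective. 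Thus the conjecture reduces to the statement that the polynomials $\al_i\bar x+\be_i\bar y$ and $\mu_{ij}$ generate a subalgebra of $\FF[\bar x,\bar y][\al_i,\be_i]$ whose ideal of relations is generated by the $K_{abc}$ and $P_{abcd}$.

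The two steps I expect to be the obstacle are exactly the two that must be done uniformly in the degree and in the number of variables: the rewriting argument producing a monomial basis of $\mathrm{gr}_C(\FX/\I)$, and the commutative-algebra injectivity of the last display. Equivalently, the conjecture is the equality of two multigraded Hilbert series, that of $R$ (computed combinatorially from $K$ and $P$) and that of the relatively free algebra $\FX/\Id{\A_1,\V}$ (computed from the symbol calculus above); we have checked this equality through degree $5$ and for two variables (Propositions~\ref{prop_deg4}, \ref{prop_deg5} and Theorem~\ref{theo_2var}), but controlling the interaction between the Koszul part (coming from $\St_3$) and the Pl\"ucker part (coming from $T_4$) in all degrees at once is what remains open. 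A possibly cleaner route is to realize $R$ directly as the subalgebra just described and to compute its Hilbert series head-on.
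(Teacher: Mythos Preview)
This statement is a \emph{conjecture} in the paper, not a theorem; the paper gives no proof of it, only the supporting evidence of Theorem~\ref{theo_2var} and Propositions~\ref{prop_deg4}, \ref{prop_deg5}. There is therefore no ``paper's own proof'' to compare your proposal against.

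Your proposal is not a proof either, and you say so yourself. You reduce the question to two assertions --- (i) that $\mathrm{gr}_C(\FX/\I)$ is presented over $\FF[x_i]$ by the Koszul relations $K_{abc}$ and the Pl\"ucker relations $P_{abcd}$ on the symbols $\xi_{ij}$, and (ii) that the ring map $R\to\FF[\bar x,\bar y][\al_i,\be_i]$ sending $x_i\mapsto\al_i\bar x+\be_i\bar y$ and $\xi_{ij}\mapsto\al_i\be_j-\al_j\be_i$ is injective --- and then explicitly state that these two steps ``remain open''. Those steps are the entire substance of the conjecture. Your filtration bookkeeping is fine, and the surjection $R\twoheadrightarrow\mathrm{gr}_C(\FX/\I)$ is essentially a repackaging of the paper's Theorem~\ref{theo_canon}: the completely reduced bracket-monomials constructed there are exactly a spanning set indexed by a straightening basis of $R$. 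What is missing, in the paper and in your outline alike, is the linear independence --- equivalently the injectivity in (ii). You offer no argument for it beyond the low-degree and two-variable checks the paper already contains, and the interaction of the Koszul and Pl\"ucker parts in arbitrary degree is precisely the difficulty.

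In short: a coherent research programme consistent with the paper's methods, but not a proof; and the paper makes no stronger claim.
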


The key definitions are given in Section~\ref{section_def} and some properties are considered in Section~\ref{section_prop}. The proofs are based on the notion of a completely reduced form of elements of $\FX$, which is introduced in Section~\ref{section_red}.

\section{Definitions and known results}\label{section_def}

\subsection{\texorpdfstring{Polynomial identities for the Weyl algebra $\A_1$}{Polynomial identities for the Weyl algebra $\A_1$}}

The {\it Weyl algebra} $\A_1$ is the unital associative algebra over $\FF$ generated by letters $x$, $y$ subject to the defining relation $yx=xy+1$ (equivalently, $[y,x]=1$, where $[y,x]=yx-xy$), i.e., $$\A_1=\Fxy/\id{yx-xy-1}.$$

We say that algebras $\algA$, $\algB$ are called PI-equivalent and write $\algA \eqPI \algB$ if $\Id{\algA} =\Id{\algB}$. We say that an L-ideal $I\in\FX$ is generated by $f_1,\ldots,f_k\in\FX$ as an L-ideal, if $I$ is an $\FF$-span of $\{f^{(1)} f_i(g_1,\ldots,g_m)f^{(2)}\}$ for all $f^{(1)},f^{(2)}\in \FX$, all linear combinations $g_1,\ldots,g_m$ of letters $\{x_1,x_2,\ldots\}$, and $1\leq i\leq k$. Obviously, in case $f_i$ is multilinear (see Section~\ref{section_notation} below) we can assume that $g_1,\ldots,g_m$ are letters. 

Assume that $p=0$. It is well-known that the algebra $\A_1$ does not have nontrivial polynomial identities. Nevertheless, some subspaces of $\A_1$ satisfy certain polynomial identities. As an example, Dzhumadil'daev proved that the standard polynomial 
$$\St_N(x_{1},\ldots,x_{N})=\sum_{\sigma\in \Sym_{N}}(-1)^{\sigma}x_{\sigma(1)}\cdots x_{\sigma(N)}$$
is a polynomial identity for $\A_1^{(-,s)} = \FF\text{-}\mathrm{span} \{ a y^s \st a\in \FF[x]\}$
if and only if $N>2s$ (Theorem 1 of~\cite{Askar_2014}). More results on polynomial identities for some subspaces of $n^{\rm th}$ Weyl algebra were obtained in~\cite{Askar_2004, Askar_Yeliussizov_2015}. Considering   $\A_1^{(-,1)}$ with respect to the Lie bracket we obtain a simple Lie algebra $\W_1$, which is called Witt algebra.  The well-known open conjecture claims that all polynomial identities for $\W_1$ follow from the standard Lie identity of degree 5. The $\ZZ$-graded identities for $W_1$ were described by Freitas, Koshlukov and  Krasilnikov~\cite{W1_2015}. Moreover, $\ZZ$-graded identities for the related Lie algebra of the derivations of the algebra of Laurent polynomials were described in~\cite{Fideles_Koshlukov_2023_JA, Fideles_Koshlukov_2023_Camb}. 

The situation is drastically different in case $p>0$. Namely, $\A_1$ is PI-equivalent to the algebra $M_p$ of all $p\times p$ matrices over $\FF$. Moreover,  the Weyl algebra $\A_1$ over an arbitrary associative (but possible non-commutative) $\FF$-algebra $\B$ is PI-equivalent to the algebra $M_p(\B)$ of all $p\times p$ matrices over $\B$ (see Theorem~4.9  of~\cite{Lopatin_Rodriguez_2022} for more general result). Polynomial identities for $\A_1^{(-,s)}$ and other subspaces of $\A_1$ were studied in~\cite{Lopatin_Rodriguez_II, Lopatin_Rodriguez_III}.

\subsection{Notations}\label{section_notation}

An algebra that satisfies a nontrivial polynomial identity is called a PI-algebra. A T-ideal $I$ of $\FF\LA X\RA$ generated by $f_1,\ldots,f_k\in \FF\LA X\RA$ is the minimal T-ideal of $\FF\LA X\RA$ that contains $f_1,\ldots,f_k$. 
We denote by $\X_m$ and $\X$ the monoids  (with unity) freely generated by the letters $x_1,\ldots, x_m$ and $x_1, x_2, \ldots$, respectively. Given $w\in \X_m$, we write $\deg_{x_i}(w)$ for the number of letters $x_i$ in $w$ and $\mdeg(w)\in\NN_0^m$ for the multidegree $(\deg_{x_1}(w),\ldots,\deg_{x_m}(w))$ of $w$, where $\NN_0=\{0,1,2,\ldots\}$ and $\NN=\{1,2,\ldots\}$. An element $f\in\FX$ is called (multi)homogeneous if it is a linear combination of monomials of the same (multi)degree. Given $f=f(x_1,\ldots,x_m)$ of $\FX$, we write $f=\sum_{\un{\de}\in \NN_0^m} f_{\un{\de}}$ for multihomogeneous components $f_{\un{\de}}$ of $f$ with $\mdeg{f_{\un{\de}}}=\un{\de}$. If $f\in\FX$ is multihomogeneous of multidegree $1^m = (1,\ldots,1)$ ($m$ times), then $f$ is called {\it multilinear}.  For $\un{\de}=(\de_1,\ldots,\de_m)$ we denote $|\un{\de}|=\de_1+\cdots+\de_m$. Given $\un{\de}\in\NN_0^{m}$, we write $\FX_{\un{\de}}$ for all elements of $\FX$ of multidegree $\un{\de}$ and we write $\Id{\algA,\algV}_{\un{\de}}$ for all elements of $\Id{\algA,\algV}$ of multidegree $\un{\de}$.

\section{Properties}\label{section_prop} 

\subsection{Properties of $\A_1$}\label{section_W1}

Given $a\in \FF[x]$, we write $\partial(a)$ for the usual derivative of a polynomial $a$ with respect to the variable $x$. Using the linearity of derivative and induction on the degree of $a\in\FF[x]$ it is easy to see that
\begin{eq}\label{eq0}
[y,a]=\partial(a) \text{ holds in }\A_1 \text{ for all }a\in \FF[x].
\end{eq}%
The following properties are well-known (for example, see~\cite{Benkart_Lopes_Ondrus_I}):

\begin{propArt}\label{prop_basis1}  
\begin{enumerate}
\item[(a)] $\{x^{i}y^{j}\tq i,j\geq 0\}$ and $\{y^{j}x^{i}\tq i,j\geq 0\}$ are $\FF$-bases for $\A_1$. 

\item[(b)] If $p=0$, then the center ${\rm Z}(\A_1)$ of $\A_1$ is $\FF$; if $p>0$, then ${\rm Z}(\A_1)=\FF[x^{p},y^{p}]$.

\item[(c)] If $p>0$, then $\A_1$ is a free module over ${\rm Z}(\A_1)$ and the set $\{x^{i}y^{j} \tq 0\leq i,j<p\}$ is a basis.

\item[(d)] The algebra $\A_1$ is simple if and only if $p=0$.
\end{enumerate}
\end{propArt}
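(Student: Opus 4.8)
The statement to prove is Proposition~\ref{prop_basis1}, which collects four well-known facts about the Weyl algebra $\A_1$. Let me sketch a proof plan.

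\textbf{Part (a): the monomial bases.}
The plan is to use the defining relation $yx = xy + 1$ as a rewriting rule to move every occurrence of $y$ to the right of every occurrence of $x$. First I would observe that $\A_1$ is spanned by words in $x,y$, and that repeated application of $yx \mapsto xy+1$ terminates (one can assign to each word a monomial order weight, e.g. count inversions where a $y$ precedes an $x$; each rewriting step strictly decreases this while producing one extra term of strictly smaller total degree). This shows $\{x^i y^j\}$ spans $\A_1$. For linear independence, the cleanest argument is to exhibit a faithful representation: let $\A_1$ act on the polynomial ring $\FF[t]$ by letting $x$ act as multiplication by $t$ and $y$ act as $d/dt$ (the formal derivative, valid in any characteristic). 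One checks $yx - xy$ acts as the identity, so this gives an algebra homomorphism $\A_1 \to \mathrm{End}(\FF[t])$; then $x^i y^j$ sends $t^k$ to $\frac{k!}{(k-j)!} t^{k-j+i}$, and a nontrivial linear combination $\sum c_{ij} x^i y^j$ is seen to act nontrivially on a suitable $t^k$ — here one must be slightly careful in positive characteristic (the falling factorials $\frac{k!}{(k-j)!}$ can vanish mod $p$), so instead I would argue directly on the images of monomials by separating by the shift $i - j$ and then by degrees, or invoke the standard ordered-monomial / Diamond Lemma argument. The basis $\{y^j x^i\}$ follows symmetrically (or by applying the anti-automorphism $x \leftrightarrow y$ up to sign of the relation).

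\textbf{Parts (b) and (c): the center and module structure in characteristic $p$.}
For $p=0$, if $z = \sum c_{ij} x^i y^j$ is central, then $[y,z] = 0$ forces, using \eqref{eq0}-style computations, that $z$ has no dependence on $x$; similarly $[x,z]=0$ kills dependence on $y$, so $z \in \FF$. For $p>0$, first verify $x^p, y^p$ are central: $[y, x^p] = \partial(x^p) = p x^{p-1} = 0$ by \eqref{eq0}, and $[x, y^p] = 0$ symmetrically; hence $\FF[x^p, y^p] \subseteq \mathrm{Z}(\A_1)$. Conversely, writing a central element in the basis $\{x^i y^j\}$ and computing brackets with $y$ and with $x$ shows every exponent must be divisible by $p$. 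For (c), using the basis from (a) one writes any element uniquely as $\sum_{0 \le i,j < p} a_{ij}(x^p,y^p)\, x^i y^j$ with $a_{ij} \in \mathrm{Z}(\A_1)$, which is exactly the statement that $\{x^i y^j : 0 \le i,j < p\}$ is a $\mathrm{Z}(\A_1)$-basis; freeness follows from the linear independence in (a).

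\textbf{Part (d): simplicity iff $p=0$.}
If $p>0$, then $(x^p - c)$ for suitable $c$, or simply the two-sided ideal generated by the central non-unit $x^p$, is a proper nonzero ideal, so $\A_1$ is not simple. If $p=0$, suppose $I \ne 0$ is a two-sided ideal and pick $0 \ne a \in I$; using $[y, -] = \partial$ on the $x$-part and the symmetric operation on the $y$-part, repeatedly bracketing lowers degrees and eventually produces a nonzero scalar in $I$, so $I = \A_1$.

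\textbf{Main obstacle.}
The one genuinely delicate point is the linear independence in part (a) over a field of positive characteristic, where the naive ``evaluate on $t^k$'' argument must be handled with care because falling factorials can vanish mod $p$; the robust fix is the ordered-monomial/Diamond-Lemma style argument, or representing $\A_1$ on the divided-power algebra rather than on $\FF[t]$. Everything else is a routine bracket computation. In fact, since all four parts are standard, in the paper I would simply cite~\cite{Benkart_Lopes_Ondrus_I} and indicate the rewriting argument for (a); the plan above records what the underlying proof is.
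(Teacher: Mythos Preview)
Your proposal is correct and matches the paper's treatment: the paper does not prove Proposition~\ref{prop_basis1} at all but simply states it as well-known with a reference to~\cite{Benkart_Lopes_Ondrus_I}, exactly as you anticipate in your final paragraph. Your sketched arguments for all four parts are sound, and you correctly flag the one genuinely delicate point (linear independence of the ordered monomials in positive characteristic, where the naive representation on $\FF[t]$ is not faithful because $y^p$ acts as zero).
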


\subsection{Partial linearizations}\label{section_partl}

Assume $f\in \FX$ is multihomogeneous of multidegree $\un{\de}\in\NN_0^m$. Given $1\leq i\leq m$ and $\un{\ga}\in\NN_0^k$ for some $k>0$ with $|\un{\ga}|=\de_i>0$, the {\it partial linearization} $\lin_{x_i}^{\un{\ga}}(f)$ of $f$ of multidegree $\un{\ga}$ with respect to $x_i$ is the multihomogeneous component of 
$$f(x_1,\ldots,x_{i-1},x_{i}+\cdots+x_{i+k-1},x_{i+k},\ldots,x_{m+k-1})$$ 
of multidegree $(\de_1,\ldots,\de_{i-1},\ga_{1},\ldots,\ga_{k},\de_{i+1},\ldots,\de_{m}$). As an example, $$\rm{lin}_{x_2}^{(2,1)}(x_1^2 x_2^3 x_3^2) = x_1^2 (x_2^2 x_3 + x_2 x_3 x_2 + x_3 x_2^2) x_4^2.$$
The result of subsequent applications of partial linearizations to $f$ is also called a partial linearization of $f$. The {\it complete linearization} $\lin(f)$ of $f$ is the result of subsequent applications of $\lin_{x_1}^{1^{\de_1}},\ldots, \lin_{x_m}^{1^{\de_m}}$ to $f$. 

Since $\FF$ is infinite, it is well-known that the following lemma holds (see also Lemma 2.3 of~\cite{Lopatin_Rodriguez_III}). 

\begin{lemmaArt}\label{lemma_id}  Assume $\algA$  is a unital $\FF$-algebra  and  $\algV\subset \algA$ is an $\FF$-subspace. 
\begin{enumerate}
\item[1.] If $f$ is a polynomial identity for $\algV$, then all partial linearizations of $f$ are also polynomial identities for $\algV$.

\item[2.] Assume that all partial linearizations of a multihomogeneous element  $f$ of  $\FX$ are equal to zero over some basis of $\algV$. Then $f$ is a polynomial identity for $\algV$. 
\end{enumerate}
\end{lemmaArt}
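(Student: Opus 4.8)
The plan is to derive both parts directly from the definition of partial linearization together with the standard fact that, over an infinite field, a polynomial (with coefficients in any vector space) which vanishes under all scalar specializations of its variables is the zero polynomial. By iterating, it suffices in each part to treat a single partial linearization $g=\lin_{x_i}^{\un\ga}(f)$, of multidegree $\un\ga=(\ga_1,\dots,\ga_k)$ in its $i$-th block of variables. For part 1, assume $f$ is a polynomial identity for $\algV$ and take arbitrary $v_1,\dots,v_{i-1},u_1,\dots,u_k,v_{i+k},\dots\in\algV$. Since $\algV$ is an $\FF$-subspace, $\la_1u_1+\cdots+\la_ku_k\in\algV$ for all $\la_1,\dots,\la_k\in\FF$, so the element
$$\phi(\la_1,\dots,\la_k)=f(v_1,\dots,v_{i-1},\la_1u_1+\cdots+\la_ku_k,v_{i+k},\dots)\in\algA[\la_1,\dots,\la_k]$$
vanishes for every choice of scalars $\la_j$. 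As $\FF$ is infinite, $\phi$ is the zero polynomial, so all of its coefficients vanish; by the very definition of $\lin_{x_i}^{\un\ga}$, the coefficient of $\la_1^{\ga_1}\cdots\la_k^{\ga_k}$ in $\phi$ equals $g(v_1,\dots,v_{i-1},u_1,\dots,u_k,v_{i+k},\dots)$. Hence $g$ is a polynomial identity for $\algV$, and iterating over subsequent partial linearizations yields the first statement.

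For part 2, fix a basis $\{e_\al\}$ of $\algV$ and argue by induction on $n(f):=|\mdeg f|-(\text{number of variables of }f)$. If $n(f)=0$, then $f$ is multilinear, its only partial linearization is $f$ itself, the hypothesis says $f$ vanishes on the $e_\al$, and expanding each argument in the basis gives $f(v_1,\dots,v_m)=0$ for all $v_j\in\algV$ by multilinearity. If $n(f)>0$, choose $j$ with $\de_j:=\deg_{x_j}f>1$; given $v_1,\dots,v_m\in\algV$, write $v_j=\sum_{l=1}^r c_l e_{\al_l}$ and expand $f$ in its $j$-th argument. Collecting terms by multidegree in the scalars $c_l$, the definition of partial linearization gives
$$f(\dots,v_j,\dots)=\sum_{l=1}^r c_l^{\de_j}\,f(\dots,e_{\al_l},\dots)+\sum_{\un\ga}(\text{scalar})\cdot g_{\un\ga}(\dots,e_{\al_{l_1}},\dots,e_{\al_{l_s}},\dots),$$
where the second sum runs over multidegrees $\un\ga$ with at least two nonzero entries and $g_{\un\ga}$ is the corresponding proper partial linearization of $f$ in $x_j$; since $g_{\un\ga}$ has more variables than $f$ while $|\mdeg g_{\un\ga}|=|\mdeg f|$, we have $n(g_{\un\ga})<n(f)$. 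Every partial linearization of $g_{\un\ga}$, and $g_{\un\ga}$ itself, is again a partial linearization of $f$, hence vanishes on the basis by hypothesis, so the induction hypothesis applies and each $g_{\un\ga}$ is a polynomial identity for $\algV$; thus the second sum is zero. Therefore $f(\dots,v_j,\dots)=\sum_l c_l^{\de_j} f(\dots,e_{\al_l},\dots)$, which has replaced the $j$-th argument by basis elements. Repeating this for every argument expresses $f(v_1,\dots,v_m)$ as an $\FF$-linear combination of values $f(e_{\be_1},\dots,e_{\be_m})$, all of which are zero by hypothesis. Hence $f$ is a polynomial identity for $\algV$.

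The only delicate points are the bookkeeping that identifies the $\la$-multihomogeneous component of $\phi$ (resp. the $c_l$-multihomogeneous component of the expansion of $f$ in one argument) with the value of the appropriate partial linearization — which is immediate once the new variables are grouped as in the definition in Section~\ref{section_partl} — and the observation that a proper partial linearization strictly decreases $n(\cdot)$, which is what makes the induction in part 2 well founded. Everything else is routine expansion.
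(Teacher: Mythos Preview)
Your proof is correct. The paper itself does not give a proof of this lemma: it simply records the statement as well known over an infinite field and refers the reader to Lemma~2.3 of~\cite{Lopatin_Rodriguez_III}. What you have written is precisely the standard justification the paper is alluding to --- extracting the multihomogeneous $\la$-component from a polynomial that vanishes for all scalar specializations (part~1), and an induction on the defect $n(f)=|\mdeg f|-(\text{number of variables})$ that reduces part~2 to the multilinear case. So there is nothing to compare against; your argument fills in exactly the omitted details.
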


\noindent{}Note that part 1 of Lemma~\ref{lemma_id} does not hold in general for a finite field. As an example, see~\cite{Lopatin_Shestakov_2013} for the case of $f(x_1)=x_1^n$ and 
$$\algA=\algV=\frac{\FX}{\id{g^n\,|\,g\in\FX \text{ without constant term}}}.$$

\section{Identities}\label{section_id}

\begin{lemmaArt}\label{lemma_PIs} The following elements of $\FX$ are weak polynomial identities for $(\A_1,\V)$:
\begin{enumerate}
\item[$\bullet$] $\Ga_m(x_1,\ldots,x_m)=[[x_1,x_2],x_3\cdots x_{m}]$ for all $m\geq3$,

\item[$\bullet$] $\St_3(x_1,x_2,x_3)=x_1 [x_2, x_3] -  x_2 [x_1, x_3] +  x_3 [x_1, x_2]$,

\item[$\bullet$] $T_4(x_1,\ldots,x_4)=[x_1,x_2] [x_3,x_4] - [x_1,x_3] [x_2,x_4] + [x_2,x_3][x_1,x_4]$,
\end{enumerate}
\end{lemmaArt}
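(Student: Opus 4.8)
The strategy is to verify each of the three elements on the algebra $\A_1$ directly, using the commutation relation $[y,x]=1$ and the basic calculus-style identity $[y,a]=\partial(a)$ from \rf{eq0}. Since all three polynomials $\Ga_m$, $\St_3$, $T_4$ are multilinear (or, in the case of $\Ga_m$, at least multihomogeneous of multidegree $1^m$), by part~2 of Lemma~\ref{lemma_id} it suffices to check that each vanishes when the variables are specialized to elements of the spanning set $\{x,y\}$ of $\V$. So the whole proof reduces to a finite check: for each identity, run over all assignments of $x$ and $y$ to the variables and confirm the value is $0$ in $\A_1$.

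For $\Ga_m(x_1,\ldots,x_m)=[[x_1,x_2],x_3\cdots x_m]$, the key observation is that $[x_1,x_2]$ evaluated on any pair from $\{x,y\}$ is either $0$ (when the two inputs coincide) or $\pm 1=\pm[y,x]$, hence lies in the center $\FF\subseteq\Z(\A_1)$. A central element commutes with everything, so $[[x_1,x_2],x_3\cdots x_m]=0$ for every such specialization; this handles $\Ga_m$ for all $m\geq 3$ at once. For $\St_3(x_1,x_2,x_3)=x_1[x_2,x_3]-x_2[x_1,x_3]+x_3[x_1,x_2]$, again every bracket $[x_i,x_j]$ specializes into $\FF$. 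If at least two of the three inputs are equal, say $x_i=x_j$, one checks the three terms cancel in pairs (the term with the nonzero bracket not involving the repeated pair, versus... ) — concretely, with inputs $(x,x,y)$ one gets $x\cdot(-1)-x\cdot(-1)+y\cdot 0=0$ up to signs, and similarly for $(x,y,y)$, $(y,x,x)$, etc.; if all three inputs were distinct that is impossible with only two generators, so $\St_3$ vanishes on all specializations. For $T_4$, the same principle applies: each of $[x_1,x_2],[x_3,x_4],[x_1,x_3],\ldots$ lands in $\FF$, so $T_4$ becomes an alternating-type expression in scalars; with only $x,y$ available, among the four inputs some generator repeats, forcing enough of the scalar brackets to vanish and the surviving terms to cancel. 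One simply enumerates the $2^4=16$ cases (or exploits the evident symmetry to reduce to a handful).

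The main obstacle, such as it is, is purely bookkeeping: organizing the case analysis for $\St_3$ and $T_4$ so that it is transparent rather than a brute sixteen-line table. The conceptual content is light — everything follows from the single fact that any commutator of two generators is a scalar (namely $0$ or $\pm 1$) together with centrality of scalars — so the cleanest presentation is to state that fact once, observe that $\Ga_m$ is then immediate by centrality, and then dispatch $\St_3$ and $T_4$ by noting that on any specialization with a repeated generator the nonvanishing scalar brackets are exactly those pairing the two distinct generators, after which the remaining two terms are negatives of each other. An alternative, slightly slicker route for $T_4$ is to recognize it as (a linearization of) a known consequence of the identities for $(M_2,sl_2)$ or of $\Ga_3$ and $\St_3$ themselves, but the direct check is short enough that I would simply carry it out.
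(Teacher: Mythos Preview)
Your plan is correct and matches the paper's argument essentially step for step: the paper also observes that $[u,v]\in\FF\subset Z(\A_1)$ for all $u,v\in\V$ to dispose of $\Ga_m$, then uses multilinearity of $\St_3$ and $T_4$ to reduce to checking on the generators $x,y$ and verifies the resulting cases directly (e.g.\ $T_4(x,x,y,y)=0-1+1=0$). The only cosmetic difference is that the paper handles $\Ga_m$ for all $u,v\in\V$ at once via~\rf{eq_Z} rather than invoking Lemma~\ref{lemma_id}, but the content is identical.
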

\begin{proof}
\noindent{\bf 1.} Since $[x,x]=[y,y]=0$ and $[x,y]=-[y,x]=-1$, we have 
\begin{eq}\label{eq_Z}
[u,v]\in Z(A_1) \text{ for all } u,v\in\V.
\end{eq}%
Thus $\Ga_m\in \Id{\A_1,\V}$.

Since any $g$ from the set $\{\St_3,\; T_4\}$ is multilinear, to show that $g\in\Id{\A_1,\V}$ it is enough to show that $g(u_1,\ldots,u_m)=0$ in $\A_1$ for all $u_1,\ldots,u_m\in\{x,y\}$. Obviously, $g(x,\ldots,x)=g(y,\ldots,y)=0$. 

\medskip
\noindent{\bf 2.} Since $\St_3(x,x,y)=\St_3(x,y,y)=0$, we obtain $\St_3\in\Id{\A_1,\V}$.

\medskip
\noindent{\bf 3.} If  $(u_1,\ldots,u_4)$ is equal to $(x,x,x,y)$ or $(x,y,y,y)$, then $T_4(u_{\si(1)},\ldots,u_{\si(4)})=0$ for all $\si\in S_4$. Similarly to $T_4(x,x,y,y)=0-1+1=0$, we obtain that $T_4(u_{\si(1)},\ldots,u_{\si(4)})=0$ for all $\si\in S_4$ and $(u_1,\ldots,u_4)=(x,x,y,y)$. Thus $T_4\in\Id{\A_1,\V}$.
\end{proof}

\begin{lemmaArt}\label{lemma_deg2}
Any weak identity for the pair $(\A_1,\V)$ of degree $\leq 2$ is zero. 
\end{lemmaArt}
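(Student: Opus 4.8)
The plan is to exploit the fact that over an infinite field it suffices to analyze multihomogeneous, hence (in low degree) essentially multilinear, identities, and then to evaluate on the basis $\{x,y\}$ of $\V$ using the relation $[y,x]=1$. First I would reduce to the multihomogeneous case: by Lemma~\ref{lemma_id} every weak identity $f$ decomposes into its multihomogeneous components, each of which is again a weak identity, so it is enough to show that every multihomogeneous weak identity of degree $\leq 2$ vanishes. Degree $0$ and degree $1$ are immediate, since $\A_1$ is unital and $\V=\FF\text{-span}\{x,y\}$ spans a $2$-dimensional space with $1,x,y$ linearly independent in $\A_1$ (by Proposition~\ref{prop_basis1}(a)). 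So the real content is degree $2$.

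For degree $2$ there are two shapes of multidegree to handle. In the multilinear case $\mdeg = (1,1)$, write $f = \alpha\, x_1 x_2 + \beta\, x_2 x_1$; evaluating at $x_1 = y$, $x_2 = x$ gives $\alpha\, yx + \beta\, xy = \alpha(xy+1)+\beta xy = (\alpha+\beta)xy + \alpha$, and since $1$ and $xy$ are linearly independent in $\A_1$ we get $\alpha = \beta = 0$. In the one-variable case $\mdeg = (2)$, write $f = \alpha\, x_1^2$; evaluating at $x_1 = x$ gives $\alpha x^2 = 0$, so $\alpha = 0$ (again by Proposition~\ref{prop_basis1}(a)). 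These two cases exhaust all multidegrees $\un\de$ with $|\un\de| = 2$ after renaming variables, so every multihomogeneous weak identity of degree $2$ is zero, and by the reduction above the same holds for an arbitrary weak identity of degree $\leq 2$.

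There is essentially no obstacle here; the only thing to be careful about is the reduction step, namely that one is allowed to pass to multihomogeneous components. This is exactly part~1 of Lemma~\ref{lemma_id} (a multihomogeneous component of a weak identity, being a suitable sum of partial linearizations together with $f$ itself in the top component — or more directly, since $\FF$ is infinite, obtained by a Vandermonde argument — is again a weak identity), so it may simply be cited. The computation then rests only on the linear independence of $\{1, x, y, x^2, xy, \ldots\}$ in $\A_1$ from Proposition~\ref{prop_basis1}(a) and the single relation $yx = xy + 1$.
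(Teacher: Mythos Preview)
Your proof is correct and follows essentially the same approach as the paper: reduce to the multihomogeneous case via Lemma~\ref{lemma_id}, then dispose of each multidegree by evaluating on $\{x,y\}$ and invoking Proposition~\ref{prop_basis1}(a). The only cosmetic difference is that in the $(1,1)$ case the paper first substitutes $(x,x)$ to get $\alpha+\beta=0$ and then $(x,y)$ to get $\alpha=0$, whereas you extract both coefficients from the single substitution $(y,x)$ using the independence of $1$ and $xy$.
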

\begin{proof}
By Lemma~\ref{lemma_id} it is enough to show that $f=0$ for every multihomogeneous weak identity $f\in\Id{\A_1,\V}$ of degree $\leq 2$.

If $\mdeg(f)=(\de)$ for $\de\in\{1,2\}$, then $f=\al x_1^{\de}$ for $\al\in\FF$ and equality $f(x)=0$ implies $\al=0$.

Assume that $\mdeg(f)=(1,1)$ and $f=\al x_1 x_2 + \be x_2 x_1$ for   $\al,\be\in\FF$. Then  $f(x,x)=0$ implies that $\al+\be = 0$, i.e.,  $f=\al [x_1,x_2]$. Hence $0=f(x,y)=-\al$ implies $\al=0$.
\end{proof}

\begin{lemmaArt}\label{lemma_Ga}
The L-ideal generated by $\Ga_m$, $\St_3$, $T_4$, where $m\geq3$, coincides with $\I$. 
\end{lemmaArt}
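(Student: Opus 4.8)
The goal of Lemma~\ref{lemma_Ga} is purely formal: it identifies the L-ideal generated by the infinite family $\{\Ga_m : m\geq 3\}\cup\{\St_3,T_4\}$ with the L-ideal $\I$ generated by the three polynomials $\Ga_3,\St_3,T_4$. Since $\Ga_3,\St_3,T_4$ all appear in both generating sets, one inclusion is trivial: $\I$ is contained in the L-ideal generated by the larger family. For the reverse inclusion, it suffices to show that each $\Ga_m$ with $m\geq 4$ lies in $\I$. The plan is to prove by induction on $m$ that $\Ga_m \in \I$.

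For the inductive step I would expand $\Ga_m(x_1,\dots,x_m) = [[x_1,x_2], x_3\cdots x_m]$ by splitting off the last variable. Using the derivation-type identity $[a, bc] = [a,b]c + b[a,c]$ with $a = [x_1,x_2]$, $b = x_3\cdots x_{m-1}$ and $c = x_m$, we get
\begin{equation*}
\Ga_m(x_1,\dots,x_m) = [[x_1,x_2], x_3\cdots x_{m-1}]\, x_m + x_3\cdots x_{m-1}\,[[x_1,x_2], x_m].
\end{equation*}
The first summand is $\Ga_{m-1}(x_1,\dots,x_{m-1})\, x_m$, which lies in $\I$ by the induction hypothesis (L-ideals are closed under right multiplication). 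The second summand is $x_3\cdots x_{m-1}\cdot \Ga_3(x_1,x_2,x_m)$ up to reindexing: indeed $[[x_1,x_2],x_m] = \Ga_3(x_1,x_2,x_m)$, and since $\I$ is an L-ideal it contains $x_3\cdots x_{m-1}\,\Ga_3(x_1,x_2,x_m)$ (closure under left multiplication, with the substitution $x_3\mapsto x_m$ in $\Ga_3$ being a linear — in fact monomial — substitution). Hence $\Ga_m\in\I$, completing the induction and the proof.

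I do not expect any genuine obstacle here; the only point requiring a little care is bookkeeping about what an L-ideal is closed under. By definition (recalled in Section~\ref{section_def}) the L-ideal generated by a set $S$ is the $\FF$-span of all $f^{(1)}\, g\, f^{(2)}$ where $g$ is obtained from some element of $S$ by a linear substitution of the variables and $f^{(1)},f^{(2)}\in\FX$ are arbitrary. For multilinear generators such as $\Ga_3$, $\St_3$, $T_4$ it is enough to substitute letters for the variables, so $x_3\cdots x_{m-1}\,\Ga_3(x_1,x_2,x_m)$ is manifestly an element of $\I$, and likewise $\Ga_{m-1}(x_1,\dots,x_{m-1})\,x_m$ is an element of the L-ideal generated by $\Ga_{m-1}$, which by induction is contained in $\I$. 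One should remark at the outset that $\Ga_m$ is multihomogeneous (indeed multilinear), so there is no issue with the substitution being linear. This closes both inclusions and yields the claimed equality of L-ideals.
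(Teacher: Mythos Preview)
Your proof is correct and follows essentially the same idea as the paper's: both arguments reduce $\Ga_m$ to $\I$ using only the relation $\Ga_3\equiv 0$, peeling off one letter at a time. The paper presents this as a direct telescoping computation (commuting $[x_1,x_2]$ past $x_3,x_4,\dots$ successively in the expansion $[x_1,x_2]x_3\cdots x_m - x_3\cdots x_m[x_1,x_2]$), whereas you package the same steps as an induction via the Leibniz rule, splitting off $x_m$ on the right; these are cosmetic differences only.
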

\begin{proof}
For $m>3$ consider 
$$\begin{array}{rcl}
\Ga_m(x_1,\ldots,x_m) & = &[x_1,x_2] x_3 x_4 \cdots x_m -  x_3 x_4 \cdots x_m [x_1,x_2] \\
&\equiv &x_3  [x_1,x_2] x_4 \cdots x_m -  x_3 x_4 \cdots x_m [x_1,x_2] \\
&\vdots & \\\
&\equiv & x_3 x_4 \cdots x_m[x_1,x_2] -  x_3 x_4 \cdots x_m [x_1,x_2] =0. \\
\end{array}
$$
The claim is proven.
\end{proof}

\section{Completely reduced bracket-monomials}\label{section_red}

\begin{definArt}\label{def_bracket_monomial}
A product 
$$x_{t_1}\cdots x_{t_l}[x_{r_1},x_{s_1}] \cdots [x_{r_k},x_{s_k}]$$ 
from $\FX$, where $\un{t}\in \NN^l$, $\un{r},\un{s}\in \NN^k$ for some $l\geq0$, $k>0$ with $r_1<s_1,\ldots, r_k<s_k$, is called a {\it bracket-monomial}.
\end{definArt}

\begin{lemmaArt}\label{lemma_bracket_monomial}
If two bracket-monomials are equal in $\FX$, then they are the same. In other words, if
$$f=x_{t_1}\cdots x_{t_l}[x_{r_1},x_{s_1}] \cdots [x_{r_k},x_{s_k}] \;\text{ and }\; 
f'= x_{t'_1}\ldots x_{t'_{l'}}[x_{r'_1},x_{s'_1}] \cdots [x_{r'_{k'}},x_{s'_{k'}}]$$
are bracket-monomials and $f=f'$ in $\FX$, then $\un{t}=\un{t}'$, $\un{r}=\un{r}'$, $\un{s}=\un{s}'$. 
\end{lemmaArt}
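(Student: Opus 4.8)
The plan is to expand the bracket-monomials as ordinary non-commutative polynomials in $\X$ and read off the data $\un{t},\un{r},\un{s}$ from combinatorial features of the expansion. First I would write, for a bracket-monomial $f = x_{t_1}\cdots x_{t_l}[x_{r_1},x_{s_1}]\cdots[x_{r_k},x_{s_k}]$, its expansion as a sum of $2^k$ words in $\X$, one for each choice of $\varepsilon = (\varepsilon_1,\dots,\varepsilon_k)\in\{0,1\}^k$, where $\varepsilon_i=0$ selects $x_{r_i}x_{s_i}$ (with sign $+1$) and $\varepsilon_i=1$ selects $x_{s_i}x_{r_i}$ (with sign $-1$); the coefficient of each such word is $(-1)^{|\varepsilon|}$ before collecting equal words. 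The total degree is $l+2k$ and the multidegree is fixed by $f$, so $l+2k$ is determined; but I need finer invariants to pin down the factorization itself.

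The key observation is that among the $2^k$ words appearing in the expansion of $f$, the word $w_0 := x_{t_1}\cdots x_{t_l}x_{r_1}x_{s_1}\cdots x_{r_k}x_{s_k}$ (corresponding to $\varepsilon = 0$) is \emph{lexicographically largest} in a suitable order: since $r_i < s_i$ for every $i$, in every ``bracket block'' the choice $x_{r_i}x_{s_i}$ beats $x_{s_i}x_{r_i}$ positionally, and the prefix $x_{t_1}\cdots x_{t_l}$ is common to all $2^k$ words. The plan is to fix the order on letters $x_1 < x_2 < \cdots$ and order words first by length, then lexicographically; then argue that after collecting terms, the maximal word occurring in $f$ with nonzero coefficient is exactly $w_0$, with coefficient $+1$ (no cancellation can hit $w_0$ because any other $\varepsilon$ produces a strictly smaller word — swapping $x_{r_i}x_{s_i}\mapsto x_{s_i}x_{r_i}$ in block $i$ strictly decreases the word, and these changes only compound). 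Hence from $f = f'$ we get $w_0 = w_0'$ in $\X$, i.e. equality of the two words $x_{t_1}\cdots x_{t_l}x_{r_1}x_{s_1}\cdots$ and $x_{t'_1}\cdots x_{t'_{l'}}x_{r'_1}x_{s'_1}\cdots$ as free monoid elements.

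From this word equality it remains to peel off the factorization. I would recover $k$ and $l$ as follows: $2k+l = 2k'+l'$ from the degree, but to separate them I would use a second extremal word, say the one for $\varepsilon=(1,1,\dots,1)$, namely $w_1 := x_{t_1}\cdots x_{t_l}x_{s_1}x_{r_1}\cdots x_{s_k}x_{r_k}$, which is the lexicographically \emph{smallest} word of full length in the expansion and carries coefficient $(-1)^k$. Comparing $w_0$ and $w_1$ letter by letter, the longest common prefix is exactly $x_{t_1}\cdots x_{t_l}$ (they first differ at position $l+1$, since $r_1\ne s_1$), which determines $l$ hence $l'=l$, hence $\un{t}=\un{t}'$; and the length of the remaining suffix gives $2k$, so $k=k'$. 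Finally, knowing $l$ and $k$, the suffix $x_{r_1}x_{s_1}\cdots x_{r_k}x_{s_k}$ of $w_0$ splits uniquely into $k$ consecutive length-two blocks, and within block $i$ the constraint $r_i<s_i$ forces the ordered reading, so $\un{r}=\un{r}'$ and $\un{s}=\un{s}'$.

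The main obstacle I anticipate is the cancellation bookkeeping in the second paragraph: I must be certain that $w_0$ (and $w_1$) survive in the collected sum with the stated coefficient and are not accidentally equal to some other word $w_\varepsilon$ in a way that cancels them. The clean way to handle this is to prove a strict monotonicity statement: for $\varepsilon \ne 0$ the word $w_\varepsilon$ is strictly smaller than $w_0$ in the length-lexicographic order, and symmetrically $w_\varepsilon > w_1$ for $\varepsilon \ne (1,\dots,1)$ — both following because the blocks are independent and each non-identity swap strictly decreases (resp. increases) the word at the first position it touches, while positions in earlier blocks are unchanged. Once this monotonicity is in hand, $w_0$ and $w_1$ are the unique maximum and minimum and hence cannot cancel, and the rest of the argument is the routine prefix/suffix parsing sketched above.
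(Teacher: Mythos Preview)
Your approach is correct and genuinely different from the paper's. One small slip: with the letter order $x_1<x_2<\cdots$ and the usual dictionary order on words, the swap $x_{r_i}x_{s_i}\mapsto x_{s_i}x_{r_i}$ \emph{increases} the word (since $r_i<s_i$), so $w_0$ is the lexicographic \emph{minimum} and $w_1$ the maximum; simply interchange ``max'' and ``min'' throughout and your argument goes through unchanged. The paper proceeds instead by an iterative first-letter analysis: every monomial in the support of $f$ begins with the fixed letter $x_{t_1}$, whereas if $l'=0$ the support of $f'$ contains monomials beginning with $x_{r'_1}$ and others beginning with $x_{s'_1}\neq x_{r'_1}$; matching the sets of leading letters forces $l=l'$ and $\un t=\un t'$, and then one peels off the brackets one at a time by the same trick, obtaining $\{r_1,s_1\}=\{r'_1,s'_1\}$ and hence $r_1=r'_1$, $s_1=s'_1$ from the ordering constraint. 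The paper's route is slightly more elementary --- no total order on words is needed, only the set of initial letters at each stage --- while yours gives a one-shot extraction of all the data and, as a bonus, your monotonicity argument (applied at the first block where two sign-vectors $\varepsilon,\varepsilon'$ differ) transparently proves that the $2^k$ monomials in the expansion are pairwise distinct, a fact the paper simply asserts.
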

\begin{proof} Note that $f$ can be written as a linear combinations of $2^k$ pairwise different monomials from $\X$ with coefficients $\pm1$. 

We can assume that $l\geq l'$. Thus, $t_1=t'_1, \ldots, t_{l'}=t'_{l'}$. Therefore, without loss of generality, we may assume that $l'=0$. In case $l>0$ we obtain that $f$ is a linear combination of pairwise different monomials which start with $x_{t_1}$, but $f'$ is a linear combination of pairwise different monomials which start with $x_{r'_1}$ and $x_{s'_1}$, where $r'_1\neq s'_1$; a contradiction. Therefore, $l'=0$.

We have that $f$ is a linear combination of pairwise different monomials which start with $x_{r_1}$ and $x_{s_1}$, but $f'$ is a linear combination of pairwise different monomials which start with $x_{r'_1}$ and $x_{s'_1}$. Hence, $\{r_1,s_1\}=\{r'_1,s'_1\}$ and inequalities $r_1<s_1$,  $r'_1<s'_1$ imply that $r_1=r'_1$ and $s_1=s'_1$.  Therefore, without loss of generality we can assume that $f=[x_{r_2},x_{s_2}] \cdots [x_{r_k},x_{s_k}]$ and  
$f'= [x_{r'_2},x_{s'_2}] \cdots [x_{r'_{k'}},x_{s'_{k'}}]$. Repeating the above reasoning several times we conclude the proof.
\end{proof}

\begin{definArt}\label{def_semired}
\noindent{\bf (a)} A bracket-monomial
\begin{eq}\label{eq_semired}
f= x_{t_1}\cdots x_{t_l} [x_{r_1},x_{s_1}] \cdots [x_{r_k},x_{s_k}]\in\FX,
\end{eq}%
where $\un{t}\in \NN^l$, $\un{r},\un{s}\in \NN^k$ for some $l\geq0$, $k>0$ with $r_1<s_1,\ldots, r_k<s_k$ is {\it semi-reduced} if 
\begin{enumerate}
\item[$\bullet$] $t_1 \leq \cdots \leq t_l$;

\item[$\bullet$] $s_1\leq \cdots \leq s_k$.
 \end{enumerate}

\noindent{\bf (b)} A semi-reduced bracket-monomial $f\in\FX$ defined by~(\ref{eq_semired}) is {\it reduced} if 
\begin{enumerate}
\item[$\bullet$] either $l=0$ or $l\geq1$, $t_l\leq s_1$.
\end{enumerate} 

\noindent{\bf (c)} A reduced bracket-monomial $f\in\FX$ defined by~(\ref{eq_semired}) is {\it completely reduced} if
\begin{enumerate}
\item[$\bullet$] do not exist $1\leq i\neq j\leq k$ with $r_j < r_i < s_i<s_j$.
\end{enumerate} 
\end{definArt}

\begin{exampleArt}\label{ex_fine_canon}
Consider the list of all completely reduced bracket-monomials of multidegree $1^m$:  
\begin{itemize}
\item $m=2$: $[x_1,x_2]$; 

\item $m=3$: $x_1[x_2,x_3]$, $x_2[x_1,x_3]$; 

\item $m=4$: $x_1x_2[x_3,x_4]$, $x_1x_3[x_2,x_4]$,  $x_2x_3[x_1,x_4]$, $[x_1,x_2][x_3,x_4]$,  $[x_1,x_3][x_2,x_4]$;  

\item $m=5$: $x_1 x_2 x_3 [x_4,x_5]$, $x_1 x_2 x_4 [x_3,x_5]$, $x_1 x_3 x_4 [x_2,x_5]$, $x_2 x_3 x_4 [x_1,x_5]$,$x_1 [x_2, x_3] [x_4,x_5]$, 

\hspace{1.29cm} $x_1 [x_2, x_4] [x_3, x_5]$, $x_2 [x_1, x_3] [x_4, x_5]$, $x_2 [x_1, x_4] [x_3, x_5]$, $x_3 [x_1, x_4] [x_2, x_5]$ 
 
\end{itemize}
\end{exampleArt}
\medskip

For $1\leq i<j$ we consider $\NN_0^i$ as a subset of $\NN_0^j$ by 
$$(r_1,\ldots,r_i)\to (r_1,\ldots,r_i,\underbrace{0,\ldots,0}_{j-i}).$$%
Assume $\un{r}\in\NN_0^i$ and $\un{s}\in\NN_0^j$ for some $i,j\geq1$. Then we write $\un{r}<\un{s}$ for the lexicographical order on $\NN_0^k$, where $k=\max\{i,j\}$ and we consider $\un{r}$, $\un{s}$ as elements of $\NN_0^k$.

%

\begin{definArt} Consider a bracket-monomial 
\begin{eq}\label{eq_defin3}
f=x_{t_1}\cdots x_{t_l}[x_{r_1},x_{s_1}] \cdots [x_{r_k},x_{s_k}]\in\FX,
\end{eq}%
where $\un{t}\in \NN^l$, $\un{r},\un{s}\in \NN^k$ for some $l\geq0$, $k>0$ and $r_1<s_1,\ldots, r_k<s_k$. Then 
\begin{enumerate}
\item[(a)] the {\it monomial weight} of $f$ in case $l>0$ is $\mw(f)=(t_{\si(1)},\ldots,t_{\si(l)})$ for some permutation $\si\in\Sym_l$ such that
$$t_{\si(1)}\geq \cdots \geq t_{\si(l)}$$
\noindent{}and $\mw(f)=(0)$ in case $l=0$.

\item[(b)] the {\it bracket weight} of $f$ is $\bw(f)=(s_{\si(1)}-r_{\si(1)}, \ldots,s_{\si(k)}-r_{\si(k)})$ for some permutation $\si\in\Sym_k$ such that 
$$s_{\si(1)}-r_{\si(1)} \geq \cdots \geq s_{\si(k)}-r_{\si(k)}.$$
\end{enumerate}
\end{definArt}

\begin{exampleArt} \noindent{\bf (a)} We have $\St_3(x_1,x_2,x_3)= f_1 - f_2 + f_3$ for the semi-reduced bracket-monomials  $$f_1=x_1 [x_2, x_3],\;\; f_2=x_2 [x_1, x_3],\;\; f_3=x_3 [x_1, x_2].$$%
Then $\mw(f_1)=(1)$, $\mw(f_2)=(2)$, $\mw(f_3)=(3)$ and $\bw(f_1)=(1)$, $\bw(f_2)=(2)$, $\bw(f_3)=(1)$. Note that $f_1$, $f_2$ are reduced, but $f_3$ is not reduced.

\noindent{\bf (b)} We have $T_4(x_1,\ldots,x_4)=h_1 - h_2 +h_3$ for the reduced bracket-monomials $$h_1=[x_1,x_2] [x_3,x_4],\;\;  h_2=[x_1,x_3] [x_2,x_4],\;\; h_3=[x_2,x_3][x_1,x_4].$$%
Then $\mw(h_i)=(0)$ for $i=1,2,3$ and $\bw(h_1)=(1,1)$, $\bw(h_2)=(2,2)$, $\bw(h_3)=(3,1)$. Note that $h_1$, $h_2$ are completely reduced, but $h_3$ is not completely reduced.
\end{exampleArt}

\begin{lemmaArt}\label{lemma_semi_reduced}
Assume that $f\in\FX$ is multihomogeneous of multidegree $\un{\de}\in\NN_0^m$. Then 
there are semi-reduced bracket-monomials $f_i\in\FX$ and $\al_i,\be\in\FF$ such that
$$f\equiv \be x_1^{\de_1}\cdots x_m^{\de_m} + \sum_i\al_i f_i,$$
where $\mdeg(f_i)=\un{\de}$ for all $i$. Moreover, 
\begin{enumerate}
\item[(a)] if $f\in\Id{\A_1,\V}$, then $\be=0$;

\item[(b)] if $f$ is a bracket-monomial, then $\be=0$ and $\mw(f_i)\leq \mw(f)$ for all $i$.
\end{enumerate}

\end{lemmaArt}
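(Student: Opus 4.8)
The plan is to prove the statement by a double induction. Since every multihomogeneous $f$ is a linear combination of monomials $w\in\X_m$ of multidegree $\un\de$, and since $\equiv$ is compatible with $\FF$-linear combinations, it suffices to treat the case $f=w$ a single monomial; then one reassembles. So I would first reduce to showing: for every monomial $w$ of multidegree $\un\de$ there are semi-reduced bracket-monomials $f_i$, scalars $\al_i$, and $\be\in\{0,1\}$ with $w\equiv \be x_1^{\de_1}\cdots x_m^{\de_m}+\sum_i\al_i f_i$; and moreover if $w$ already is a bracket-monomial (i.e.\ $k>0$) then $\be=0$ and $\mw(f_i)\le\mw(w)$.

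\textbf{Step 1 (from a monomial to a bracket-monomial).} Starting from a plain monomial $w=x_{t_1}\cdots x_{t_d}$, I would sort the letters into non-decreasing order by repeatedly swapping adjacent factors $x_a x_b$ with $a>b$, using $x_a x_b = x_b x_a + [x_a,x_b]$. Each swap replaces $w$ by $x_b x_a\cdots$ plus a term in which the two swapped letters are collapsed into a single bracket $[x_b,x_a]$ (after normalising the bracket so the smaller index is first, at the cost of a sign). By induction on the number of inversions, $w\equiv \be\, x_1^{\de_1}\cdots x_m^{\de_m}+(\text{bracket-monomials of multidegree }\un\de)$ with $\be\in\{0,1\}$; here $\be=1$ iff we started from a plain monomial (no bracket), and $\be=0$ once at least one bracket has been created — in particular if $w$ was already a bracket-monomial, then $\be=0$. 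Note each new bracket-monomial has strictly smaller $l$ (the number of free letters) than its parent, so this terminates.

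\textbf{Step 2 (reducing a bracket-monomial to semi-reduced form).} Now take a bracket-monomial $g=x_{t_1}\cdots x_{t_l}[x_{r_1},x_{s_1}]\cdots[x_{r_k},x_{s_k}]$. Sorting the free prefix $x_{t_1}\cdots x_{t_l}$ into non-decreasing order proceeds exactly as in Step~1: an adjacent swap produces a sorted term plus bracket-monomials with strictly fewer free letters, to which Step~1/Step~2 applies inductively, and the new bracket-monomials inherit $\be=0$. To reorder the brackets so that $s_1\le\cdots\le s_k$, I use that $\Ga_3\in\I$ forces $[x_a,x_b]$ to commute past any letter modulo $\I$ (first move brackets past free letters is not needed here) and, more to the point, forces $[x_a,x_b][x_c,x_d]\equiv[x_c,x_d][x_a,x_b]$: indeed $[[x_a,x_b],\,x_c x_d - x_d x_c]=[[x_a,x_b],[x_c,x_d]]\in\I$, so the brackets commute with each other modulo $\I$ and can be sorted by their second index $s$ with no extra terms at all. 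Since sorting the $t$'s and sorting the brackets do not interfere, the result is semi-reduced. Throughout, when a new bracket-monomial is spawned by a swap among the $t$'s, its monomial weight $\mw$ is obtained from $\mw(g)$ by deleting two entries and possibly inserting none — hence $\mw(\cdot)\le\mw(g)$ in the lexicographic sense — which gives~(b); and the reordering of brackets does not change $\mw$ at all. Part~(a) is immediate from part~(b) of Lemma~\ref{lemma_bracket_monomial} together with Lemma~\ref{lemma_id}: if $f\in\Id{\A_1,\V}$ and $\be\neq0$, then evaluating the multilinear... — more simply, specialise all variables to $x$; every semi-reduced bracket-monomial $f_i$ with $k>0$ vanishes on the $1$-dimensional span of $x$ (it contains a bracket $[x,x]=0$), while $\be x_1^{\de_1}\cdots x_m^{\de_m}$ becomes $\be x^{|\un\de|}\neq0$, forcing $\be=0$. (Here one uses that $\I\subseteq\Id{\A_1,\V}$, which follows from Lemma~\ref{lemma_PIs}.)

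\textbf{Main obstacle.} The delicate point is bookkeeping the induction so that it genuinely terminates and so that the $\mw$-bound in~(b) is preserved: every operation I perform must strictly decrease some well-founded quantity (here: the number of free letters $l$, lexicographically together with the number of inversions among them), and the auxiliary bracket-monomials produced along the way must themselves satisfy $\mw\le\mw(f)$ so that the inductive hypothesis can be reapplied to them. Verifying that the commutator identity $[[x_a,x_b],[x_c,x_d]]\in\I$ (which lets brackets be sorted for free) and the $\Ga_3$-relation suffice to realise all the required rearrangements modulo $\I$ — and checking that no step ever needs to move a bracket to the \emph{left} past a free letter in a way that would spoil $\mw$ — is the part that needs care; everything else is routine commutator manipulation.
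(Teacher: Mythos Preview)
Your proposal is correct and follows essentially the same route as the paper: the two rewriting rules you use --- swapping adjacent letters and absorbing the commutator (then pushing it to the far right via $\Ga_m$), and freely permuting brackets modulo $\I$ --- are precisely the paper's equivalences $f_1 x_j x_i f_2 \equiv f_1 x_i x_j f_2 - f_1 f_2[x_i,x_j]$ and $f_1[x_i,x_j]f_0 f_2 \equiv f_1 f_0[x_i,x_j]f_2$, and your argument for~(a) (specialise every variable to $x$) is verbatim the paper's. The only place your write-up is looser than the paper is that in Step~1 the term ``collapsed into a single bracket'' is not yet a bracket-monomial until that bracket is pushed to the right end via $\Ga_m$; you clearly intend this (and use it in Step~2), so just make that move explicit.
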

\begin{proof} 
Assume that $f_1,f_2\in\FF\LA x_1,\ldots,x_m\RA$ are multihomogeneous and $1\leq i<j\leq m$. Since
$$f_1 x_j x_i f_2 = f_1 x_i x_j f_2 - f_1 [x_i,x_j] f_2 = 
 f_1 x_i x_j f_2 - f_1 f_2 [x_i,x_j]  - f_1 [[x_i,x_j],f_2],$$
by Lemma~\ref{lemma_Ga} we obtain that 
\begin{eq}\label{eq_appT3}
f_1 x_j x_i f_2 \equiv f_1 x_i x_j f_2 - f_1 f_2 [x_i,x_j].
\end{eq}%
Lemma~\ref{lemma_Ga} also implies that
\begin{eq}\label{eq_appT3_2}
f_1 [x_i,x_j] f_0 f_2 \equiv f_1 f_0 [x_i,x_j] f_2
\end{eq}%
\noindent{}for every $f_0\in\FX$. Since equivalences~\rf{eq_appT3} and \rf{eq_appT3_2} preserve the multidegree, applying formulas~\rf{eq_appT3} and \rf{eq_appT3_2} to $f$ we obtain that $f\equiv\be x_1^{\de_1}\cdots x_m^{\de_m} + \sum_i\al_i f_i$ for some  semi-reduced bracket-monomials $f_i$ and $\be,\al_i\in\FF$, where $\mdeg(f_i)=\un{\de}$. 

Assume $f\in\Id{\A_1,\V}$. Since $[x,x]=0$, we have $f_i(x,\ldots,x)=0$ in $\A_1$ for all $i$. Therefore, $0=f(x,\ldots,x)= \be\, x^{|\un{\de}|}$. Thus $\be=0$. 

If $f$ is a bracket-monomial, then it is easy to see that  $\be=0$ and $\mw(f_i)\leq \mw(f)$ for all $i$.
\end{proof}

\begin{lemmaArt}\label{lemma_reduced}
Consider a semi-reduced bracket-monomial 
$$f=x_{t_1}\cdots x_{t_l}[x_{r_1},x_{s_1}] \cdots [x_{r_k},x_{s_k}]\in\FX.$$
Then $f\equiv\sum_i \al_i f_i$ for some $\al_i\in\FF$, $f_i\in\FX$ such that $f_i$ is a reduced bracket-monomial, $\mdeg(f_i)=\mdeg(f)$, and $\mw(f_i)\leq \mw(f)$  for all $i$.  
\end{lemmaArt}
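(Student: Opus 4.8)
The plan is to reduce a semi-reduced bracket-monomial to reduced form by repeatedly moving any ``large'' letter $x_{t_l}$ past the first bracket $[x_{r_1},x_{s_1}]$ whenever $t_l>s_1$, using the equivalences already established in the proof of Lemma~\ref{lemma_semi_reduced}. Concretely, suppose $f=x_{t_1}\cdots x_{t_l}[x_{r_1},x_{s_1}]\cdots[x_{r_k},x_{s_k}]$ is semi-reduced but not reduced, so $l\geq1$ and $t_l>s_1\geq r_1$. I would apply~\rf{eq_appT3}, with $f_1=x_{t_1}\cdots x_{t_{l-1}}$ playing the role of the left part, to rewrite $x_{t_l}[x_{r_1},x_{s_1}]$: first use~\rf{eq_appT3_2} if needed to move $[x_{r_1},x_{s_1}]$ so that it sits immediately after $x_{t_l}$ (it already does here), and then use the identity $x_{t_l}[x_{r_1},x_{s_1}]=[x_{r_1},x_{s_1}]x_{t_l}+[x_{t_l},[x_{r_1},x_{s_1}]]$, i.e.\ commutator reordering, together with Lemma~\ref{lemma_Ga} to discard the iterated bracket term $[x_{t_l},[x_{r_1},x_{s_1}]]$ modulo $\I$ (this is exactly $\Ga_3$). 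After doing this across all $k$ brackets, $x_{t_l}$ has been pushed to the far right and we obtain $f\equiv x_{t_1}\cdots x_{t_{l-1}}[x_{r_1},x_{s_1}]\cdots[x_{r_k},x_{s_k}]x_{t_l}$, which is no longer a bracket-monomial in the required form. Instead, the cleaner route: use~\rf{eq_appT3} applied to the product $x_{t_l}\cdot(\text{everything to its right})$ written as $x_{t_l}x_{r_1}\cdots$ after expanding the bracket, which swaps $x_{t_l}$ with $x_{r_1}$ at the cost of a new bracket $[x_{r_1},x_{t_l}]$ moved to the right end; since $r_1<t_l$ this new bracket is properly ordered, and one checks $r_1<s_1$ so $[x_{r_1},x_{t_l}]$ can be absorbed into the list of $k+1$ brackets after re-sorting by second index.

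More precisely, the single reduction step I would isolate is: if $t_l>s_1$, then
$$x_{t_1}\cdots x_{t_l}[x_{r_1},x_{s_1}]\cdots[x_{r_k},x_{s_k}]
\equiv x_{t_1}\cdots x_{t_{l-1}}x_{r_1}\cdots \quad(\text{lower }\mw),$$
obtained by writing $x_{t_l}[x_{r_1},x_{s_1}]=x_{t_l}x_{r_1}x_{s_1}-x_{t_l}x_{s_1}x_{r_1}$, applying~\rf{eq_appT3} to swap $x_{t_l}$ leftward past nothing but to swap the pair $(x_{r_1},x_{t_l})$ appropriately — in fact the right formulation is $x_{t_l}[x_{r_1},x_{s_1}]\equiv x_{r_1}[x_{t_l},x_{s_1}] + [x_{r_1},x_{t_l}]x_{s_1} - \cdots$, a Jacobi/Leibniz rearrangement — so that after moving the resulting bracket factors to the right via~\rf{eq_appT3_2} and re-sorting second indices (which is allowed because $\St_3$ and the bracket equivalences let us permute), the monomial part loses the entry $t_l$ and gains $r_1<t_l$, strictly decreasing $\mw$ in the lexicographic order. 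Each such step strictly decreases $\mw(f)$ (comparing the sorted tuples of monomial-letter indices lexicographically), and $\mw$ takes values in a well-ordered set, so the process terminates; when it halts, every term is a semi-reduced bracket-monomial with $t_l\leq s_1$, i.e.\ reduced, and by construction $\mdeg$ is preserved throughout (all equivalences used preserve multidegree) and $\mw(f_i)\leq\mw(f)$.

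I would organize the write-up as: (i) state the single-step lemma (``if $f$ semi-reduced with $t_l>s_1$ then $f\equiv\sum\al_i f_i$ with each $f_i$ semi-reduced, $\mdeg(f_i)=\mdeg(f)$, and $\mw(f_i)<\mw(f)$ strictly''), proving it by the explicit bracket manipulation above plus re-sorting; (ii) observe the base case $l=0$ or $t_l\leq s_1$ is already reduced; (iii) run Noetherian induction on $\mw(f)$ with respect to the well-ordering on sorted tuples over $\NN$ (equivalently, induct on $\deg f$ and then on $\mw$), using Lemma~\ref{lemma_semi_reduced}(b) to keep all intermediate terms semi-reduced when the manipulation temporarily produces a non-semi-reduced bracket-monomial. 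The main obstacle I anticipate is bookkeeping in step (i): after expanding $x_{t_l}[x_{r_1},x_{s_1}]$ and applying~\rf{eq_appT3}, one produces several bracket-monomials whose bracket-lists are not yet sorted by second index and whose monomial prefixes may not be sorted, so one must reapply Lemma~\ref{lemma_semi_reduced} to re-semi-reduce them, and must verify this re-semi-reduction does not reintroduce a larger $\mw$ — this is exactly where Lemma~\ref{lemma_semi_reduced}(b)'s guarantee $\mw(f_i)\leq\mw(f)$ is essential. The other delicate point is checking that the new bracket $[x_{r_1},x_{t_l}]$ (or whatever bracket is created) genuinely satisfies $r_1<t_l$ so that Definition~\ref{def_bracket_monomial}'s ordering condition $r<s$ holds without an extra sign; this follows because $r_1<s_1<t_l$.
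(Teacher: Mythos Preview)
Your overall architecture --- induction on $\mw(f)$, with the single-step reduction replacing $x_{t_l}$ by a smaller letter in the monomial prefix, followed by Lemma~\ref{lemma_semi_reduced}(b) to restore semi-reduced form --- is exactly the paper's approach. The problem is that you never correctly identify the single step.

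Your first attempt (push $x_{t_l}$ past all brackets using $\Ga_3$) you rightly abandon: it just yields $[x_{r_1},x_{s_1}]\cdots[x_{r_k},x_{s_k}]x_{t_l}$, which modulo $\Ga_3$ is equivalent to $f$ again. Your later attempts via~\rf{eq_appT3} and ``Jacobi/Leibniz'' suffer the same fate: every manipulation that uses only $\Ga_3$ (and~\rf{eq_appT3},~\rf{eq_appT3_2} are consequences of $\Ga_3$ alone) reduces to the commutation $x_{t_l}[x_{r_1},x_{s_1}]\equiv[x_{r_1},x_{s_1}]x_{t_l}$, which does \emph{not} lower $\mw$. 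If you carry out the computation you sketch --- expand the bracket, apply~\rf{eq_appT3} repeatedly --- all the extra terms cancel and you are back where you started.

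The identity you actually need is $\St_3$. The paper's key step is equivalence~\rf{eq_T3appl}:
\[
x_{t_l}[x_{r_1},x_{s_1}]\;\equiv\; x_{s_1}[x_{r_1},x_{t_l}]\;-\;x_{r_1}[x_{s_1},x_{t_l}],
\]
which is precisely $\St_3(x_{r_1},x_{s_1},x_{t_l})\equiv0$ rearranged. This replaces the prefix letter $x_{t_l}$ by $x_{s_1}$ or $x_{r_1}$, both strictly smaller since $r_1<s_1<t_l$, and so strictly decreases $\mw$ in each of the two resulting bracket-monomials. Your ``right formulation'' $x_{t_l}[x_{r_1},x_{s_1}]\equiv x_{r_1}[x_{t_l},x_{s_1}]+[x_{r_1},x_{t_l}]x_{s_1}-\cdots$ is, modulo a further application of $\Ga_3$ to the second term, exactly this $\St_3$ identity --- but it is \emph{not} a Jacobi or Leibniz identity (those hold universally and cannot by themselves lower $\mw$), and there is no ``$-\cdots$'': the formula has exactly two terms. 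Once you use~\rf{eq_T3appl}, the rest of your outline (apply Lemma~\ref{lemma_semi_reduced}(b) to each piece, then induct) is correct and matches the paper verbatim.
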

\begin{proof} Since $f$ is semi-reduced, we have $r_1<s_1,\ldots, r_k<s_k$,  $t_1 \leq \cdots \leq t_l$ and also $s_1\leq \cdots \leq s_k$, where $l\geq0$, $k>0$.

We prove the lemma by induction on $\mw(f)$. If $\mw(f)=(0)$, then $l=0$ and $f$  is reduced.

Assume that $(0)<\mw(f)$ and for every semi-reduced bracket-monomial $f'\in\FX$ with $\mw(f')<\mw(f)$ the statement of this lemma holds. 

Assume that $f$ is not reduced, i.e., $l\geq1$ and $t_l>s_1$.  Using $\St_3$ from Lemma~\ref{lemma_PIs}, we obtain
\begin{eq}\label{eq_T3appl}
x_{t_l} [x_{r_1}, x_{s_1}] \equiv  x_{s_1} [x_{r_1}, x_{t_l}] - x_{r_1} [x_{s_1}, x_{t_l}].
\end{eq}
Thus $f \equiv   f_1  -  f_2$ for 
$$f_1 =  x_{t_1}\cdots x_{t_{l-1}} x_{s_1} [x_{r_1},x_{t_l}][x_{r_2},x_{s_2}] \cdots [x_{r_k},x_{s_k}],$$
$$f_2 =  x_{t_1}\cdots x_{t_{l-1}} x_{r_1} [x_{s_1},x_{t_l}][x_{r_2},x_{s_2}] \cdots [x_{r_k},x_{s_k}].$$
Note that $\mdeg(f_1) = \mdeg(f_2) = \mdeg(f)$. Using part (b) of Lemma~\ref{lemma_semi_reduced}, we obtain semi-reduced bracket-monomials $g_{1i},g_{2j}\in\FX$ and $\al_{1i},\al_{2j}\in\FF$ such that $f_1 \equiv \sum_i \al_{1i}g_{1i}$ and $f_2 \equiv \sum_j \al_{2j}g_{2j}$, where  $\mdeg(g_{1i})=\mdeg(g_{2j})=\mdeg(f)$, $\mw(g_{1i}) \leq \mw(f_1)< \mw(f)$ and  $\mw(g_{2j}) \leq \mw(f_2)< \mw(f)$. Applying the induction hypothesis to $g_{1i}$, $g_{2j}$ we conclude the proof, since $f\equiv \sum_i \al_{1i}g_{1i} - \sum_i \al_{2i}g_{2i}$.
\end{proof}


\begin{lemmaArt}\label{lemma_fine_reduced}
Consider a reduced bracket-monomial 
$$f= x_{t_1}\cdots x_{t_l} [x_{r_1},x_{s_1}] \cdots [x_{r_k},x_{s_k}]\in\FX.$$  
Then $f\equiv\sum_i \al_i f_i$ for some $\al_i\in\FF$, $f_i\in\FX$ such that $f_i$ is a completely reduced bracket-monomial, $\mdeg(f_i)=\mdeg(f)$, and $\mw(f_i)\leq \mw(f)$ for all $i$.  
\end{lemmaArt}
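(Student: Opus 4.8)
\textbf{Proof plan for Lemma~\ref{lemma_fine_reduced}.}
The plan is to proceed by induction on the bracket weight $\bw(f)$ with respect to the lexicographical order on $\NN_0^k$, treating $\mw(f)$ as a fixed secondary parameter that never increases. If $f$ is already completely reduced there is nothing to prove; in particular this covers the base case, where $\bw(f)$ is minimal, since a reduced bracket-monomial whose bracket weight cannot be lexicographically decreased by the rewriting below will turn out to be completely reduced.

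So suppose $f$ is reduced but not completely reduced: there exist indices $i\neq j$ with $r_j<r_i<s_i<s_j$, i.e.\ the $j$-th bracket ``nests around'' the $i$-th one. The key algebraic step is a rewriting rule that replaces a nested pair of brackets $[x_{r_i},x_{s_i}][x_{r_j},x_{s_j}]$ with $r_j<r_i<s_i<s_j$ by a combination of bracket-monomials of strictly smaller bracket weight. This is exactly what $T_4$ provides: since $T_4(x_a,x_b,x_c,x_d)\in\I$ gives $[x_a,x_b][x_c,x_d]\equiv [x_a,x_c][x_b,x_d]-[x_b,x_c][x_a,x_d]$ (up to reordering and the sign conventions), applying it to the pair of indices $\{r_j,r_i,s_i,s_j\}$ in the appropriate arrangement rewrites the ``bad'' nested pair as a combination of pairs whose two bracket-widths are $\{r_i-r_j,\ s_j-s_i\}$ and $\{s_i-r_j,\ s_j-r_i\}$ type splittings — each of which has a largest part strictly smaller than $s_j-r_j$, the largest of the original two widths. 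One then has to check the bookkeeping: multiplying the $T_4$ relation on the left and right by the other monomial factors of $f$ (and reabsorbing any stray commutators via Lemma~\ref{lemma_Ga}, which is harmless since $\Ga_m\in\I$) keeps us inside $\FX$ modulo $\I$ and does not change the multidegree. After this single replacement $f$ becomes a $\FF$-combination of bracket-monomials, each of multidegree $\mdeg(f)$, each with bracket weight strictly smaller than $\bw(f)$ in the lexicographic order; these need not be semi-reduced or reduced, so I would then apply Lemma~\ref{lemma_semi_reduced} and Lemma~\ref{lemma_reduced} to each summand to bring it back to reduced form, noting that both of those lemmas only decrease (never increase) $\mw$ and — crucially — do not increase $\bw$ either, since the rewrites \rf{eq_appT3}, \rf{eq_appT3_2}, \rf{eq_T3appl} used there either preserve the multiset of bracket-widths or replace a bracket of width $w$ by two brackets of widths summing in a way that does not exceed $w$. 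This keeps us strictly below $\bw(f)$, so the induction hypothesis applies and finishes the argument.

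The main obstacle I anticipate is the very last point of the previous paragraph: verifying that the reduction to reduced form carried out by Lemmas~\ref{lemma_semi_reduced} and~\ref{lemma_reduced} genuinely does not push the bracket weight back up above $\bw(f)$. The rule \rf{eq_T3appl} converts $x_t[x_{r_1},x_{s_1}]$ into $x_{s_1}[x_{r_1},x_t]-x_{r_1}[x_{s_1},x_t]$, and when the enclosing monomial letters $x_{t_i}$ are larger than the bracket indices this can a priori create wider brackets; one must confront the fact that in a \emph{reduced} bracket-monomial we already have $t_l\le s_1$, so the situation that triggers \rf{eq_T3appl} does not arise for the summands produced by the $T_4$-replacement once they are semi-reduced, OR one must enlarge the induction to be simultaneous in the pair $(\bw(f),\mw(f))$ under a suitable lexicographic product order and re-examine the monotonicity claims in Lemmas~\ref{lemma_semi_reduced}(b) and~\ref{lemma_reduced} with bracket weight tracked explicitly. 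I would handle this by stating and proving an auxiliary monotonicity observation — that formulas \rf{eq_appT3}, \rf{eq_appT3_2} and \rf{eq_T3appl} never increase $\bw$ — and then running the whole argument on the product order $(\bw,\mw)$, so that each invocation of the earlier lemmas is a legitimate descent. Everything else (preservation of multidegree, membership in $\I$) is routine given Lemmas~\ref{lemma_PIs},~\ref{lemma_Ga}, and~\ref{lemma_reduced}.
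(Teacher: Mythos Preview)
Your plan correctly identifies $T_4$ as the rewriting tool and correctly argues that the $T_4$ step strictly decreases $\bw$. But both options you offer for closing the ``main obstacle'' fail, and this is a genuine gap.

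First, the claim that ``the situation that triggers \rf{eq_T3appl} does not arise for the summands produced by the $T_4$-replacement once they are semi-reduced'' is false. Write the nested indices as $a_1<a_2<a_3<a_4$ (so $a_1=r_j$, $a_2=r_i$, $a_3=s_i$, $a_4=s_j$). After $T_4$ one of the summands is $f_1 = x_{t_1}\cdots x_{t_l}\,[x_{a_1},x_{a_2}][x_{a_3},x_{a_4}]\,b$, which contains a bracket with right index $a_2$. Since the original $f$ is reduced we only know $t_l\le s_1\le s_i=a_3$; it is perfectly possible that $a_2<t_l\le a_3$. In that case $f_1$, even after reordering its brackets, is \emph{not} reduced, and Lemma~\ref{lemma_reduced} must invoke \rf{eq_T3appl}.

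Second, your proposed auxiliary monotonicity --- that \rf{eq_T3appl} never increases $\bw$ --- is simply false: that rule is applied precisely when $t_l>s_1$, and it replaces a bracket of width $s_1-r_1$ by one of width $t_l-r_1>s_1-r_1$. So $\bw$ goes up, and an induction with $\bw$ as the leading coordinate cannot be closed this way.

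The paper resolves this by reversing the nesting of the two inductions: the \emph{outer} induction is on $\mw(f)$, and for each fixed monomial weight there is an inner induction on $\bw$. When the bad case $a_2<t_l$ occurs for $f_1$, one first applies \rf{eq_T3appl} to $f_1$; this yields bracket-monomials with strictly smaller $\mw$ (since $t_l>a_1,a_2$), and then Lemmas~\ref{lemma_semi_reduced}(b) and~\ref{lemma_reduced} bring them to reduced form without increasing $\mw$. The outer induction on $\mw$ then finishes those terms, with no need to control $\bw$. The other summand $f_2$ always has $t_l\le a_3$ as its smallest right index, so it stays reduced after reordering brackets and feeds directly into the inner induction on $\bw$. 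Your $T_4$ analysis is fine; what needs fixing is the induction scheme.
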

\begin{proof} Since $f$ is reduced, we have $r_1<s_1,\ldots, r_k<s_k$,  $t_1 \leq \cdots \leq t_l\leq s_1\leq \cdots \leq s_k$, where $l\geq0$, $k>0$.

We prove the lemma by induction on $\mw(f)$. 

\medskip
\noindent{\bf 1.} Assume $\mw(f)=(0)$, i.e.,  $f=  [x_{r_1},x_{s_1}] \cdots [x_{r_k},x_{s_k}]$. To show that the statement of this lemma holds for $f$ we use induction on $\bw(f)$.

Obviously, if $\bw(f)=(1)$, i.e., $f=[x_{r_1},x_{s_1}]$  with $s_1-r_1=1$, then $f$ is completely reduced.

Assume $(1)<\bw(f)$ and for every reduced bracket-monomial $f'\in\FX$ such that $\mw(f')=(0)$ and $\bw(f')<\bw(f)$ the statement of the lemma holds.

Assume that $f$ is not completely reduced, i.e., there are $1\leq i\neq j\leq k$ such that $r_j < r_i < s_i<s_j$.  For short, denote $a_1=r_j$, $a_2=r_i$, $a_3=s_i$, $a_4=s_j$. Note that $a_1<a_2<a_3< a_4$. Using equivalence~(\ref{eq_appT3_2}) we obtain that 
$$ [x_{r_1},x_{s_1}] \cdots [x_{r_k},x_{s_k}] \equiv [x_{a_2},x_{a_3}] [x_{a_1},x_{a_4}]\,  b $$
for some product $b=[x_{r'_1},x_{s'_1}] \cdots [x_{r'_{k-2}},x_{s'_{k-2}}]$ of brackets. Applying the equivalence $T_4(x_{a_1}, x_{a_2}, x_{a_3}, x_{a_4})\equiv0$, we obtain
$$   [x_{a_2},x_{a_3}] [x_{a_1},x_{a_4}] \equiv  
- [x_{a_1},x_{a_2}] [x_{a_3},x_{a_4}] +   [x_{a_1},x_{a_3}] [x_{a_2},x_{a_4}].$$

\noindent{}Thus $f \equiv  - f_1  +  f_2$ for 
$$f_1 =  [x_{a_1},x_{a_2}] [x_{a_3},x_{a_4}] b,$$
$$f_2 =  [x_{a_1},x_{a_3}] [x_{a_2},x_{a_4}] b.$$
Note that $\mdeg(f_1) =\mdeg(f_2) = \mdeg(f)$. Since
$$\bw([x_{a_2},x_{a_3}] [x_{a_1},x_{a_4}])=(a_4-a_1,a_3-a_2)$$
is greater than both $\bw( [x_{a_1},x_{a_2}] [x_{a_3},x_{a_4}] )$ and $\bw( [x_{a_1},x_{a_3}] [x_{a_2},x_{a_4}] )$, 
we can see that $\bw(f_1) < \bw(f)$ and $\bw(f_2) < \bw(f)$. 
  
We use equivalence~(\ref{eq_appT3_2}) to obtain reduced bracket-monomials $g_1$ and $g_2$ such that $g_1 \equiv f_1$ and $g_2 \equiv f_2$, where $\mdeg(g_1)=\mdeg(g_2)=\mdeg(f)$, $\mw(g_1)=\mw(g_2)=(0)$, 
$$\bw(g_1)= \bw(f_1)<\bw(f) \;\;\text{ and }\;\; \bw(g_2)= \bw(f_2)<\bw(f).$$%
Applying the induction hypothesis to $g_1$ and $g_2$, we can see that the statement of this lemma holds for $f$.

\medskip
\noindent{\bf 2.} Assume that $(0)<\mw(f)$, that is, $l\geq1$, and for every reduced bracket-monomial $f'\in\FX$ with $\mw(f')<\mw(f)$ the claim of this lemma holds. To show that the statement of this lemma holds for $f$ we use induction on $\bw(f)$.

Obviously, if $\bw(f)=(1)$, i.e., $f=x_{t_1}\cdots x_{t_l}[x_{r_1},x_{s_1}]$ with $s_1-r_1=1$, then $f$ is completely reduced.

Assume $(1)<\bw(f)$ and for every reduced bracket-monomial $f'\in\FX$ such that $\mw(f')=\mw(f)$ and $\bw(f')<\bw(f)$ the statement of the lemma holds.

Assume that $f$ is not completely reduced, i.e., there are $1\leq i\neq j\leq k$ such that $r_j < r_i < s_i<s_j$.  For short, denote $a_1=r_j$, $a_2=r_i$, $a_3=s_i$, $a_4=s_j$. Note that $a_1<a_2<a_3< a_4$ and $t_l\leq a_3$. Using equivalence~(\ref{eq_appT3_2}) we obtain that 
$$ [x_{r_1},x_{s_1}] \cdots [x_{r_k},x_{s_k}] \equiv [x_{a_2},x_{a_3}] [x_{a_1},x_{a_4}]\,  b $$
for some product $b=[x_{r'_1},x_{s'_1}] \cdots [x_{r'_{k-2}},x_{s'_{k-2}}]$ of brackets, where $t_l \leq s'_1\leq\cdots\leq s'_{k-2}$. Since $T_4(x_{a_1}, x_{a_2}, x_{a_3}, x_{a_4})\equiv0$, we obtain
$$   [x_{a_2},x_{a_3}] [x_{a_1},x_{a_4}] \equiv  
- [x_{a_1},x_{a_2}] [x_{a_3},x_{a_4}] +   [x_{a_1},x_{a_3}] [x_{a_2},x_{a_4}].$$

\noindent{}Thus $f \equiv  - f_1  +  f_2$ for 
$$f_1 = x_{t_1}\cdots x_{t_l}  [x_{a_1},x_{a_2}] [x_{a_3},x_{a_4}] b,$$
$$f_2 = x_{t_1}\cdots x_{t_l}  [x_{a_1},x_{a_3}] [x_{a_2},x_{a_4}] b.$$
Note that $\mdeg(f_1) =\mdeg(f_2) = \mdeg(f)$. Since
$$\bw([x_{a_2},x_{a_3}] [x_{a_1},x_{a_4}])=(a_4-a_1,a_3-a_2)$$
is greater than $\bw( [x_{a_1},x_{a_2}] [x_{a_3},x_{a_4}] )$ and $\bw( [x_{a_1},x_{a_3}] [x_{a_2},x_{a_4}] )$, 
we can obtain the inequalities $\bw(f_1) < \bw(f)$ and $\bw(f_2) < \bw(f)$. 

\medskip
\noindent{\bf 2.1.} Assume that $l_t\leq a_2$. Then $l_t\leq a_2 < a_3 < a_4$ and $t_l \leq s'_1\leq\cdots\leq s'_{k-2}$. We use equivalence~(\ref{eq_appT3_2}) to obtain reduced bracket-monomials $g_1$ and $g_2$ such that $g_1 \equiv f_1$ and $g_2 \equiv f_2$, where $\mdeg(g_1)=\mdeg(g_2)=\mdeg(f)$, $\mw(g_1)=\mw(g_2)=\mw(f)$, 
$$\bw(g_1)= \bw(f_1) < \bw(f) \;\;\text{ and }\;\; \bw(g_2)= \bw(f_2) < \bw(f).$$  
Applying induction on bracket weight to $g_1$ and $g_2$, we obtain that the statement of the lemma holds for $f$. 

\medskip
\noindent{\bf 2.2.} Assume $a_2<t_l$. Using equivalence~(\ref{eq_T3appl}), we obtain
$f_1 \equiv h_1 - h_2$ for 
$$h_1 = x_{t_1}\cdots x_{t_{l-1}}  x_{a_2}  [x_{a_1},x_{t_l}] [x_{a_3},x_{a_4}] b,$$   
$$h_2 = x_{t_1}\cdots x_{t_{l-1}} x_{a_1}  [x_{a_2},x_{t_l}] [x_{a_3},x_{a_4}] b.$$%
Since $t_l>a_1,a_2$, we have $\mw(h_1) < \mw(f)$ and  $\mw(h_2) < \mw(f)$. Using part (b) of Lemma~\ref{lemma_semi_reduced} and Lemma~\ref{lemma_reduced}, we obtain reduced bracket-monomials $g_{1i'},g_{2j'}\in\FX$ and scalars $\al_{1i'},\al_{2j'}\in\FF$ such that $h_1 \equiv \sum_{i'}\al_{1i'}g_{1i'}$ and $h_2 \equiv \sum_{j'} \al_{2j'}g_{2j'}$, and where  $\mdeg(g_{1i'})=\mdeg(g_{2j'})=\mdeg(f)$, 
$$\mw(g_{1i'}) \leq \mw(h_1)< \mw(f) \;\; \text{ and } \;\; \mw(g_{2j'}) \leq \mw(h_2)< \mw(f).$$%
We apply induction on monomial weight to $g_{1i'}$ and $g_{2j'}$ to show that the statement of the lemma holds for $f_1$.  We establish that the statement of the lemma holds for $f_2$ by repeating the proof from part 2.1. Therefore, the statement of the lemma holds for $f$. 
\end{proof}

\begin{theoArt}\label{theo_canon}
Assume that $f\in \FX$ is multihomogeneous of multidegree $\un{\de}\in\NN_0^m$.  Then 
there are completely reduced bracket-monomials $f_i\in\FX$ and $\al_i,\be\in\FF$ such that
$$f\equiv \be x_1^{\de_1}\cdots x_m^{\de_m} + \sum_i\al_i f_i,$$
where $\mdeg(f_i)=\un{\de}$ for all $i$. Moreover, 
\begin{enumerate}
\item[(a)] if $f\in\Id{\A_1,\V}$, then $\be=0$;

\item[(b)] if $f$ is a bracket-monomial, then $\be=0$ and $\mw(f_i)\leq \mw(f)$ for all $i$.
\end{enumerate}

\end{theoArt}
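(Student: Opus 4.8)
The plan is to combine the three reduction lemmas already established (Lemmas~\ref{lemma_semi_reduced}, \ref{lemma_reduced}, and~\ref{lemma_fine_reduced}) into a single statement, essentially by chaining them. Since $\FF$ is infinite, by linearity it suffices to treat the case when $f$ is multihomogeneous, which is exactly our hypothesis. First I would apply Lemma~\ref{lemma_semi_reduced} to write $f\equiv \be x_1^{\de_1}\cdots x_m^{\de_m} + \sum_i \al_i f_i$ with each $f_i$ a semi-reduced bracket-monomial of multidegree $\un\de$. Then I would apply Lemma~\ref{lemma_reduced} to each semi-reduced $f_i$ to replace it (modulo $\I$) by a linear combination of reduced bracket-monomials of the same multidegree, and finally apply Lemma~\ref{lemma_fine_reduced} to each of those reduced bracket-monomials to replace it by a linear combination of completely reduced bracket-monomials of the same multidegree. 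Collecting everything and renaming indices yields the desired expression $f\equiv \be x_1^{\de_1}\cdots x_m^{\de_m} + \sum_i \al_i f_i$ with the $f_i$ now completely reduced; multidegree is preserved at every step, so $\mdeg(f_i)=\un\de$.

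For part~(a): if $f\in\Id{\A_1,\V}$, then Lemma~\ref{lemma_semi_reduced}(a) already gives $\be=0$ in the first reduction step, and since the subsequent applications of Lemmas~\ref{lemma_reduced} and~\ref{lemma_fine_reduced} only act on the bracket-monomial part and do not reintroduce a pure-power term $x_1^{\de_1}\cdots x_m^{\de_m}$, the coefficient $\be$ in the final expression is still $0$. Alternatively, one can argue directly: every completely reduced bracket-monomial $f_i$ vanishes under $x_1=\cdots=x_m=x$ (since it contains at least one bracket and $[x,x]=0$), while $x_1^{\de_1}\cdots x_m^{\de_m}$ evaluates to $x^{|\un\de|}\neq 0$, so evaluating $f\equiv \be x^{|\un\de|} + 0$ together with $f\in\Id{\A_1,\V}$ and $\I\subseteq\Id{\A_1,\V}$ (by Lemmas~\ref{lemma_PIs} and~\ref{lemma_Ga}) forces $\be=0$.

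For part~(b): if $f$ is itself a bracket-monomial, Lemma~\ref{lemma_semi_reduced}(b) gives $\be=0$ and $\mw(f_i)\le \mw(f)$ for the semi-reduced $f_i$ produced in the first step; then Lemma~\ref{lemma_reduced} produces reduced bracket-monomials with $\mw\le \mw(f_i)\le \mw(f)$, and Lemma~\ref{lemma_fine_reduced} produces completely reduced bracket-monomials with $\mw\le$ the previous ones, hence $\le \mw(f)$. Transitivity of the order on monomial weights then gives the claim, and $\be$ remains $0$ for the same reason as above.

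The proof is essentially bookkeeping: there is no genuine obstacle, since all the combinatorial work—the termination of the rewriting procedures and the control of $\mw$ and $\bw$—has been carried out in the three preceding lemmas. The only point requiring a little care is to make sure that the monomial-weight bound $\mw(f_i)\le\mw(f)$ survives composition of the three reductions, which follows because each lemma's output satisfies a $\le$ bound relative to its input and the order is transitive; and to observe that no step regenerates a pure-monomial term with nonzero coefficient once we are past the first reduction, so that $\be$ is determined already by Lemma~\ref{lemma_semi_reduced}.
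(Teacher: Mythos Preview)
Your proposal is correct and takes essentially the same approach as the paper: the paper's proof consists of a single sentence saying to apply Lemmas~\ref{lemma_semi_reduced}, \ref{lemma_reduced}, and~\ref{lemma_fine_reduced} in succession. Your write-up simply spells out the bookkeeping (preservation of multidegree, tracking of $\beta$ and $\mw$) that the paper leaves implicit.
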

\begin{proof} Consequently applying Lemmas~\ref{lemma_semi_reduced}, \ref{lemma_reduced}, \ref{lemma_fine_reduced} we obtain the required.
\end{proof}

\section{Minimal weak polynomial identities}\label{section_min}

\begin{theoArt}\label{theo_2var}
Every weak polynomial identity for the pair $(\A_1,\V)$ in two variables lies in the L-ideal $\I$ generated by $\St_3$, $\Ga_3$, $T_4$.
\end{theoArt}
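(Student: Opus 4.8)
The strategy is to reduce to multihomogeneous weak identities in the two variables $x_1,x_2$ and then show that every such identity is forced, up to $\I$, to vanish. By Lemma~\ref{lemma_id}, it suffices to treat a multihomogeneous $f\in\Id{\A_1,\V}$ of some multidegree $\un{\de}=(a,b)\in\NN_0^2$. Applying Theorem~\ref{theo_canon}, I may assume $f\equiv\sum_i\al_i f_i$ where the $f_i$ are completely reduced bracket-monomials of multidegree $(a,b)$ (the pure monomial term drops out by part~(a) of that theorem, since $f$ is an identity for $\V$). So the task becomes: the completely reduced bracket-monomials of multidegree $(a,b)$ in the two letters $x_1,x_2$ are linearly independent modulo $\I$ \emph{and} every nonzero $\FF$-combination of them evaluates to something nonzero in $\A_1$ under $x_1\mapsto x,\,x_2\mapsto y$, so that $f\equiv0$.

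The key point is that a completely reduced bracket-monomial in only two letters is extremely rigid. In such a monomial $x_{t_1}\cdots x_{t_l}[x_{r_1},x_{s_1}]\cdots[x_{r_k},x_{s_k}]$ every index is $1$ or $2$, every bracket satisfies $r_j<s_j$, hence each bracket must be exactly $[x_1,x_2]$; and then semi-reducedness plus reducedness forces the leading word $x_{t_1}\cdots x_{t_l}$ to be $x_1^{c}x_2^{d}$ with $t_l\le s_1=2$, which is automatic. Thus for multidegree $(a,b)$ the completely reduced bracket-monomials are precisely $x_1^{a-k}x_2^{b-k}[x_1,x_2]^k$ for $0\le k\le\min(a,b)$, and in fact $k\ge1$ is required (a bracket-monomial has $k>0$), so the span in question is at most $\min(a,b)$-dimensional. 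Now evaluate at $x_1=x$, $x_2=y$: since $[x,y]=-1$ is central in $\A_1$, the monomial $x_1^{a-k}x_2^{b-k}[x_1,x_2]^k$ maps to $(-1)^k x^{a-k}y^{b-k}$, and these elements $x^{a-k}y^{b-k}$ (for distinct $k$) are part of the PBW-type basis $\{x^iy^j\}$ of $\A_1$ from Proposition~\ref{prop_basis1}(a), hence linearly independent. Therefore $\sum_i\al_i f_i$ evaluated at $(x,y)$ is $\sum_k(-1)^k\al_k x^{a-k}y^{b-k}=0$ forces all $\al_k=0$, so $f\equiv0$, i.e.\ $f\in\I$.

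To finish I need to package this cleanly: state that it is enough to handle one multidegree, invoke Theorem~\ref{theo_canon} to pass to the completely reduced form, enumerate the completely reduced bracket-monomials of multidegree $(a,b)$ in two letters (a short combinatorial check using Definition~\ref{def_semired}, exactly as illustrated for small $m$ in Example~\ref{ex_fine_canon}), and then run the evaluation-at-$(x,y)$ argument above using the basis of Proposition~\ref{prop_basis1}(a) together with the centrality~\eqref{eq_Z} of brackets of elements of $\V$. A minor point to dispatch is the degenerate case $\min(a,b)=0$: then there are no bracket-monomials of that multidegree at all, and by Theorem~\ref{theo_canon} $f\equiv\be x_1^{\de_1}x_2^{\de_2}$ with $\be=0$, so again $f\in\I$.

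\textbf{Main obstacle.} The only real content, beyond bookkeeping, is the claim that in two letters the completely reduced bracket-monomials of a fixed multidegree are exactly the powers $x_1^{c}x_2^{d}[x_1,x_2]^k$; this is where one must be careful that the constraints "$r_j<s_j$", "$s_1\le\cdots\le s_k$", "$t_l\le s_1$", and the no-nesting condition genuinely collapse everything with only the indices $1,2$ available. Once that enumeration is in hand, linear independence is immediate from the PBW basis and the centrality of $[x,y]$, so no delicate estimate or induction is needed here — Theorem~\ref{theo_canon} has already done the heavy lifting.
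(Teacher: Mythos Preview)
Your proposal is correct and follows essentially the same route as the paper: reduce to a multihomogeneous identity via Lemma~\ref{lemma_id}, use Theorem~\ref{theo_canon} to pass to completely reduced bracket-monomials, observe that in two letters these are exactly $x_1^{a-k}x_2^{b-k}[x_1,x_2]^k$, and then evaluate at $(x,y)$ and invoke the basis of Proposition~\ref{prop_basis1}(a) to kill all coefficients. The paper's own proof is terser (it simply asserts the form $\sum_i\al_i x_1^{r-i}x_2^{s-i}[x_1,x_2]^i$ without spelling out why the constraints of Definition~\ref{def_semired} force this), so your added justification of that enumeration is a welcome clarification rather than a departure.
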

\begin{proof}
Assume that $f\in \FF\LA x_1,x_2\RA$ is a weak polynomial identity in two variables for the pair $(\A_1,\V)$. By Lemma~\ref{lemma_id}, we can assume that $f$ is multihomogeneous of multidegree $(r,s)$ for some $r,s\geq0$. Then Theorem~\ref{theo_canon} implies that $f$ is equivalent to a linear combination of completely reduced bracket-monomials of multidegree $(r,s)$. 

Assume that $r\geq s$. If $s=0$, then $f\equiv0$ by the definition of completely reduced bracket-monomials. Assume $s>0$. Then
$$f(x_1,x_2) \equiv\sum_{i=1}^s \al_i x_1^{r-i} x_2^{s-i} [x_1,x_2]^i$$
for some $\al_i\in\FF$. Since $0=f(x,y)=\sum_{i=1}^s (-1)^i \al_i x^{r-i} y^{s-i}$ in $\A_1$, we obtain  by part (a) of Proposition~\ref{prop_basis1} that $\al_1=\cdots=\al_s=0$, i.e., $f\equiv0$.

The case of $r<s$ can be considered similarly. The proof is completed.
\end{proof}

\begin{lemmaArt}\label{lemma_deg3}
Every weak polynomial identity for the pair $(\A_1,\V)$ of degree $3$ lies in the L-ideal generated by $\Ga_3$ and $\St_3$. 
\end{lemmaArt}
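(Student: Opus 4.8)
The plan is to reduce to the multihomogeneous case and then exploit the classification of completely reduced bracket-monomials in low degree. By Lemma~\ref{lemma_id} it suffices to treat a multihomogeneous weak identity $f$ of total degree $3$, so $\mdeg(f)$ is one of $(3)$, $(2,1)$ (and its permutation), or $(1,1,1)$. Pure powers and partial linearizations reduce everything to the multilinear case $(1,1,1)$ together with what was already handled in Lemma~\ref{lemma_deg2}; more precisely, a weak identity of multidegree $(2,1)$ linearizes to one of multidegree $(1,1,1)$, so I will first dispose of the multilinear case and then argue that the partial-linearization step is reversible enough to pull the conclusion back. Actually the cleanest route is: by Theorem~\ref{theo_canon}, $f$ is equivalent modulo $\I$ to a linear combination of completely reduced bracket-monomials of the same multidegree, and by part (a) of that theorem the pure-monomial term $\be x_1^{\de_1}\cdots x_m^{\de_m}$ has $\be=0$. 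So $f\equiv\sum_i\al_i f_i$ with the $f_i$ completely reduced of degree $3$.

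Next I would use Example~\ref{ex_fine_canon}, which lists all completely reduced bracket-monomials of multidegree $1^m$: for $m=3$ these are exactly $x_1[x_2,x_3]$ and $x_2[x_1,x_3]$. Thus a multilinear $f$ of degree $3$ satisfies $f\equiv\al\, x_1[x_2,x_3]+\be\, x_2[x_1,x_3]$ for scalars $\al,\be$. Since $\I\subseteq\Id{\A_1,\V}$ by Lemmas~\ref{lemma_PIs} and~\ref{lemma_Ga}, this $\al\, x_1[x_2,x_3]+\be\, x_2[x_1,x_3]$ is itself a weak identity. Now I evaluate on the generators: substituting $(x_1,x_2,x_3)=(y,x,x)$ gives $0$ trivially since $[x,x]=0$, but $(x_1,x_2,x_3)=(x,y,x)$ gives $\al\, x[y,x]=\al\,x$, forcing $\al=0$ (using that $x\ne0$ in $\A_1$, e.g.\ by Proposition~\ref{prop_basis1}(a)), and $(x_1,x_2,x_3)=(y,y,x)$ gives $\be\, y[y,x]=\be\,y$, forcing $\be=0$. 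Hence $f\equiv0$, i.e.\ $f\in\I$; but $\St_3$ and $\Ga_3$ generate $\I$ in degree $3$ (indeed $T_4$ has degree $4$ and cannot appear in the degree-$3$ part of $\I$, since the L-ideal $\I$ is spanned by homogeneous-degree-graded pieces), so $f$ lies in the L-ideal generated by $\Ga_3$ and $\St_3$, as claimed.

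The remaining multidegrees are handled by the same mechanism. For $\mdeg(f)=(3)$ the only completely reduced bracket-monomial is none (bracket-monomials require $k>0$), so $f\equiv\be x_1^3$ and $\be=0$ by Theorem~\ref{theo_canon}(a); thus $f\equiv0$. For $\mdeg(f)=(2,1)$ I would either invoke Theorem~\ref{theo_2var} directly (every weak identity in two variables lies in $\I$, and in degree $3$ the degree-$4$ generator $T_4$ is irrelevant), or repeat the reduced-monomial analysis: by Example~\ref{ex_fine_canon} one lists the completely reduced bracket-monomials of multidegree $(2,1)$, namely $x_1 x_2[x_1,x_2]$ is degree $4$, so actually in degree $3$ the candidates of multidegree $(2,1)$ are $x_1[x_1,x_2]$ (and of multidegree $(1,2)$, $x_2[x_1,x_2]$); evaluating $x_1[x_1,x_2]$ at $(x,y)$ gives $x[x,y]=-x\ne0$, so its coefficient vanishes, and $f\equiv0$ again. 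Either way one concludes $f\in\I$ with only $\Ga_3,\St_3$ needed, since $T_4\notin\FX_{\un\de}$ for any $\un\de$ with $|\un\de|=3$.

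\textbf{Main obstacle.} The only genuinely delicate point is the bookkeeping in the multidegree-$(2,1)$ case: one must be sure that reducing modulo $\I$ really lands in the span of the \emph{degree-$3$} completely reduced bracket-monomials (so that the degree-$4$ generator $T_4$ plays no role), and that a degree-$3$ element of $\I$ is actually a combination of $\I$'s degree-$3$ generators $\Ga_3$ and $\St_3$. This follows because $\I$ is homogeneous with respect to total degree (each generator $\Ga_3,\St_3$ is homogeneous of degree $3$ and $T_4$ of degree $4$, and L-ideal operations — left/right multiplication by arbitrary elements of $\FX$, and linear substitutions of the variables — preserve the total-degree grading), but it is worth stating explicitly. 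Everything else is a short finite evaluation of a handful of monomials on $x$ and $y$, using only $[y,x]=1$ and Proposition~\ref{prop_basis1}(a) to know the relevant elements of $\A_1$ are nonzero; I expect no real difficulty there.
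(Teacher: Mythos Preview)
Your proposal is correct and follows essentially the same route as the paper: reduce to multihomogeneous $f$ via Lemma~\ref{lemma_id}, apply Theorem~\ref{theo_canon} to write $f$ as a combination of completely reduced bracket-monomials of the given multidegree, then kill the coefficients by evaluating at a few pairs/triples from $\{x,y\}$, and finally observe that the degree-$3$ component of $\I$ involves only $\Ga_3$ and $\St_3$. The paper uses the evaluations $(x,y,y)$ and $(x,y,x)$ in the multilinear case and simply cites Theorem~\ref{theo_2var} for the two-variable multidegrees, whereas you use $(x,y,x)$ and $(y,y,x)$ and also spell out the $(2,1)$ case by hand; these are cosmetic differences, and your explicit justification that $T_4$ cannot contribute in total degree $3$ is exactly what the paper's one-line ``the definition of the ideal $\I$ implies the required'' is pointing at.
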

\begin{proof} Assume that $f\in \FX$ is a weak polynomial identity of degree 3 for the pair $(\A_1,\V)$. By Lemma~\ref{lemma_id}, we can assume that $f$ is multihomogeneous of multidegree $\De$ with $|\De|=3$.  By Theorem~\ref{theo_canon},  $f$ is equivalent to a linear combination of completely reduced bracket-monomials of multidegree $\De$. 

Assume $\De=(1,1,1)$. Then
$$f(x_1,x_2,x_3) \equiv \al x_1 [x_2,x_3] + \be x_ 2[x_1,x_3],$$
where $\al,\be\in\FF$. Since $0=f(x,y,y) = - \be y$ and $0=f(x,y,x)= \al x$ in $\A_1$, we obtain $\al=\be=0$. The definition of the ideal $\I$ implies the required.

If $\De=(2,1)$ or $\De=(3)$, then Theorem~\ref{theo_2var} concludes the proof.
\end{proof}

\begin{theoArt}\label{theo_deg3} 
\noindent{\bf 1.} The minimal degree of a non-trivial weak polynomial identity for the pair  $(\A_1,\V)$ is three.

\smallskip
\noindent{\bf 2.} The vector space $\Id{\A_1,\V}_{\De}$ for $|\De|=3$ has the following basis:
\begin{enumerate}
\item[$\bullet$]  $\Ga_3(x_1,x_2,x_3)$, $\Ga_3(x_1,x_3,x_2)$,  and $\St_3(x_1,x_2,x_3)$, in case $\De=1^3$; 

\item[$\bullet$]  $\Ga_3(x_1,x_2,x_1)$, in case $\De=(2,1)$,

\item[$\bullet$]  $\emptyset$, in case $\De=(3)$.
\end{enumerate}
\end{theoArt}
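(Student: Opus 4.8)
The plan is to reduce each multidegree case to statements already proved.

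\medskip

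\noindent\textbf{Part 1 (minimal degree).} By Lemma~\ref{lemma_deg2} there is no nontrivial weak polynomial identity of degree $\leq 2$, while Lemma~\ref{lemma_PIs} produces $\St_3$ and $\Ga_3$, which are nontrivial (they are not zero in $\FX$). Hence the minimal degree is exactly three.

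\medskip

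\noindent\textbf{Part 2 (description of $\Id{\A_1,\V}_{\De}$ for $|\De|=3$).} I would argue by inspecting the three possible multidegrees. First I would show that the listed elements genuinely lie in $\Id{\A_1,\V}_{\De}$: for $\De=1^3$ this is immediate from Lemma~\ref{lemma_PIs} together with part~1 of Lemma~\ref{lemma_id} (partial/complete linearizations and permutations of variables of an identity are identities), and for $\De=(2,1)$ the element $\Ga_3(x_1,x_2,x_1)$ is obtained from $\Ga_3$ by the linear substitution $x_3\mapsto x_1$, so it is again a weak identity. For $\De=(3)$ the claimed basis is empty, which matches Lemma~\ref{lemma_deg2} (equivalently Theorem~\ref{theo_2var}): there are no nonzero identities of multidegree $(3)$.

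\medskip

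\noindent\textbf{Spanning.} The heart of the proof is that every element of $\Id{\A_1,\V}_{\De}$ is a linear combination of the listed ones. By Lemma~\ref{lemma_deg3}, any weak identity $f$ of degree $3$ lies in the L-ideal generated by $\Ga_3$ and $\St_3$, and by Theorem~\ref{theo_canon} (with $\be=0$ since $f\in\Id{\A_1,\V}$) it is equivalent modulo $\I$ to a linear combination of completely reduced bracket-monomials of multidegree $\De$. For $\De=(3)$ there are none, and for $\De=(2,1)$ Theorem~\ref{theo_2var} already pins $\Id{\A_1,\V}_{(2,1)}$ down inside $\I$; one then checks, using equivalences \rf{eq_appT3} and \rf{eq_appT3_2}, that $\Ga_3(x_1,x_2,x_1)\equiv0$ and that it spans the space of identities in that multidegree (indeed the only completely reduced bracket-monomial of multidegree $(2,1)$ is $x_1[x_1,x_2]$, and $\Ga_3(x_1,x_2,x_1)=[[x_1,x_2],x_1]$ equals $x_1[x_1,x_2]-[x_1,x_2]x_1=2x_1[x_1,x_2]$ modulo... actually using \rf{eq_appT3_2} it is $\equiv 0$, so the identity space is one-dimensional spanned by this element, consistent with the one-dimensional space of bracket-monomials modulo $\I$ being killed). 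For $\De=1^3$, from the proof of Lemma~\ref{lemma_deg3} every multilinear identity of degree $3$ is equivalent modulo $\I$ to $0$; translating this back, $f$ is a linear combination of the generators $\Ga_3(x_{\si(1)},x_{\si(2)},x_{\si(3)})$ and $\St_3(x_{\si(1)},x_{\si(2)},x_{\si(3)})$. The remaining task is a finite linear-algebra computation in the multilinear component $\FX_{1^3}$ (which is $6$-dimensional, spanned by the six monomials $x_{\si(1)}x_{\si(2)}x_{\si(3)}$): one lists all $\Ga_3$ and $\St_3$ specializations, computes the span of the resulting subspace of $\FX_{1^3}$, and verifies it is $3$-dimensional with the three displayed elements forming a basis. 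Linear independence of these three is easy to see by looking at leading monomials in a fixed order.

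\medskip

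\noindent\textbf{Main obstacle.} The only genuinely delicate point is the bookkeeping in the $\De=1^3$ case: one must confirm that all the relations among the various specializations of $\Ga_3$ and $\St_3$ (for instance $\Ga_3(x_1,x_2,x_3)+\Ga_3(x_2,x_3,x_1)+\Ga_3(x_3,x_1,x_2)=0$ by the Jacobi identity, and similar symmetry relations for $\St_3$) cut the naive count down to dimension exactly $3$, and that no further identities survive. This is a routine but careful computation in a $6$-dimensional space, and is where I would spend most of the effort; everything else follows formally from Lemmas~\ref{lemma_deg2}, \ref{lemma_deg3} and Theorems~\ref{theo_canon}, \ref{theo_2var}.
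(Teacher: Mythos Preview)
Your overall strategy matches the paper's: Part~1 via Lemmas~\ref{lemma_PIs} and~\ref{lemma_deg2}, and Part~2 via Lemma~\ref{lemma_deg3} (every degree-$3$ identity lies in the L-ideal generated by $\Ga_3$ and $\St_3$), followed by a finite linear-algebra reduction in each multidegree. For $\De=1^3$ you correctly identify the mechanism --- antisymmetry and the Jacobi relation among the $\Ga_3(x_{\si(1)},x_{\si(2)},x_{\si(3)})$ together with the fact that $\St_3$ is alternating --- and this is exactly what the paper does.

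The one place your write-up does not yet constitute a proof is the case $\De=(2,1)$. Knowing that $\Id{\A_1,\V}_{(2,1)}\subset\I$ and that $\Ga_3(x_1,x_2,x_1)\equiv 0$ does not, by itself, show that $\Ga_3(x_1,x_2,x_1)$ \emph{spans} $\Id{\A_1,\V}_{(2,1)}$; your parenthetical computation drifts between ``equal in $\FX$'' and ``equivalent modulo $\I$'' without reaching a conclusion. The clean argument (which is what the paper's one-line appeal to Lemma~\ref{lemma_deg3} is implicitly using) is simply to list the multidegree-$(2,1)$ substitutions: in degree~$3$ the L-ideal generated by $\Ga_3,\St_3$ consists of $\FF$-linear combinations of $\Ga_3(x_i,x_j,x_k)$ and $\St_3(x_i,x_j,x_k)$ with $i,j,k\in\{1,2\}$; every such $\St_3$ vanishes (e.g.\ $\St_3(x_1,x_1,x_2)=0=\St_3(x_1,x_2,x_1)$), and every nonzero such $\Ga_3$ equals $\pm\Ga_3(x_1,x_2,x_1)$. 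That gives spanning, and nonvanishing of $\Ga_3(x_1,x_2,x_1)=2x_1x_2x_1-x_1^2x_2-x_2x_1^2$ in $\FX$ gives the basis claim. Alternatively, your dimension-count idea can be made rigorous: $\dim\FX_{(2,1)}=3$, the images of $x_1^2x_2$ and $x_1[x_1,x_2]$ are linearly independent modulo $\I$ (evaluate at $(x,x)$ and $(x,y)$), hence $\dim\I_{(2,1)}\le 1$, and $\Ga_3(x_1,x_2,x_1)\ne 0$ forces equality.

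Finally, invoking Theorem~\ref{theo_canon} and Theorem~\ref{theo_2var} here is heavier than necessary; Lemma~\ref{lemma_deg3} already does all the work in degree~$3$.
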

\begin{proof} \noindent{\bf 1.} It follows from Lemmas~\ref{lemma_PIs} and~\ref{lemma_deg2}.

\smallskip
\noindent{\bf 2.} Lemma~\ref{lemma_deg3} implies that $\Ga_3(x_1,x_2,x_1)\neq0$ is a basis for $\Id{\A_1,\V}_{(2,1)}$ and $\emptyset$ is a basis for $\Id{\A_1,\V}_{(2,1)}=\{0\}$.

Since $\Ga_3(x_1,x_2,x_3)$ and $\St_3(x_1,x_2,x_3)$ are multilinear, Lemma~\ref{lemma_deg3} implies that every element $f\in \Id{\A_1,\V}_{1^3}$ lies in the $\FF$-span of 
$$\Ga_3(x_{\si(1)},x_{\si(2)},x_{\si(3)}) \;\text{ and }\; \St_3(x_{\si(1)},x_{\si(2)},x_{\si(3)})$$

\noindent{}for all $\si\in S_3$. Since $\Ga_3(x_2,x_3,x_1) = - \Ga_3(x_1,x_2,x_3) + \Ga_3(x_1,x_3,x_2)$, we obtain that $f$ lies in the $\FF$-span of $\Ga_3(x_1,x_2,x_3)$, $\Ga_3(x_1,x_3,x_2)$, and $\St_3(x_1,x_2,x_3)$. The linear independence follows from straightforward calculations.
\end{proof}

\section{Weak polynomial identities of degrees 4 and 5}\label{section_deg56}

\begin{propArt}\label{prop_deg4}
Any weak polynomial identity for the pair $(\A_1,\V)$ of degree $4$ lies in the L-ideal $\I$ generated by $\Ga_3$, $\St_3$, and $T_4$. 
\end{propArt}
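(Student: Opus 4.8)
The plan is to reduce the problem, via Theorem~\ref{theo_canon}, to checking that the only $\FF$-linear combination of completely reduced bracket-monomials of each multidegree $\De$ with $|\De|=4$ that vanishes on $\A_1$ is the zero combination. By Lemma~\ref{lemma_id} we may assume $f$ is multihomogeneous of some multidegree $\De$ with $|\De|=4$; by Theorem~\ref{theo_canon}(a), $f\equiv\sum_i\al_i f_i$ with the $f_i$ completely reduced bracket-monomials of multidegree $\De$, and it suffices to prove all $\al_i=0$. The multidegrees to treat are $(4)$, $(3,1)$, $(2,2)$, $(2,1,1)$, and $1^4$; the cases $(4)$, $(3,1)$ and $(2,2)$ are already handled by Theorem~\ref{theo_2var} (two variables), so only $\De=(2,1,1)$ and $\De=1^4$ remain.

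For $\De=1^4$ I would use the list in Example~\ref{ex_fine_canon}: the completely reduced bracket-monomials are $x_1x_2[x_3,x_4]$, $x_1x_3[x_2,x_4]$, $x_2x_3[x_1,x_4]$, $[x_1,x_2][x_3,x_4]$, $[x_1,x_3][x_2,x_4]$. So $f\equiv \al_1 x_1x_2[x_3,x_4]+\al_2 x_1x_3[x_2,x_4]+\al_3 x_2x_3[x_1,x_4]+\al_4[x_1,x_2][x_3,x_4]+\al_5[x_1,x_3][x_2,x_4]$. Now I evaluate at suitable tuples $(u_1,u_2,u_3,u_4)\in\{x,y\}^4$ and use the bases from Proposition~\ref{prop_basis1}(a). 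Recall $[x,y]=-1$, $[x,x]=[y,y]=0$, so any bracket-monomial in which some bracket $[u_r,u_s]$ has $u_r=u_s$ vanishes, and brackets evaluate to scalars (by~\eqref{eq_Z}). For instance $f(x,x,x,y)\equiv -\al_1 x^2$ forces $\al_1=0$; $f(x,x,y,x)$ isolates $\al_2$; $f(x,y,x,x)$... here the second bracket-monomial $x_1x_3[x_2,x_4]$ at $(x,y,x,x)$ gives $x\cdot x\cdot[y,x]=x^2$, so this isolates $\al_2$ up to sign—I would instead organize the substitutions so each one kills all but one surviving monomial, e.g.\ choosing the two "$x$"-slots and the two "$y$"-slots to match the indices appearing inside a chosen bracket. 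A clean way: to isolate $\al_k$ for a monomial of the form $x_ax_b[x_c,x_d]$, substitute $x\mapsto$ in positions giving a nonzero scalar from $[x_c,x_d]$ while making the other four monomials vanish (two $x$'s and two $y$'s with the $y$'s in the two "commutator" slots); for the pure-bracket monomials $[x_a,x_b][x_c,x_d]$ substitute so one commutator has equal arguments. Then Proposition~\ref{prop_basis1}(a) forces the corresponding scalar to vanish, and running through all five monomials gives $\al_1=\dots=\al_5=0$.

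For $\De=(2,1,1)$ the completely reduced bracket-monomials are obtained by specializing $x_1\mapsto x_1$ twice (equivalently, identifying two variables in the $1^4$ list): a short enumeration gives monomials such as $x_1^2[x_2,x_3]$, $x_1x_2[x_1,x_3]$, $x_1x_3[x_1,x_2]$ (the last may fail to be reduced or completely reduced—I would carefully apply Definition~\ref{def_semired} to extract the genuine list), plus possibly $[x_1,x_2][x_1,x_3]$. I would then substitute $x_1\mapsto x$, $x_2,x_3\in\{x,y\}$, again using~\eqref{eq_Z} and Proposition~\ref{prop_basis1}(a): e.g.\ $f(x,x,y)$, $f(x,y,x)$, $f(x,y,y)$ produce linear relations among the $\al$'s with coefficients that are monomials $x^iy^j$, and linear independence of those monomials forces all $\al$'s to zero.

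The main obstacle I anticipate is purely bookkeeping: correctly writing down the complete list of completely reduced bracket-monomials in each multidegree (in particular verifying the "reduced" condition $t_l\le s_1$ and the "completely reduced" condition forbidding $r_j<r_i<s_i<s_j$, so that the list is neither too big nor too small), and then choosing, for each monomial on the list, a substitution in $\{x,y\}$ that isolates it. Once the substitution table is set up, the rest is a finite verification using only $[x,y]=-1$, the centrality~\eqref{eq_Z}, and the PBW-type basis of Proposition~\ref{prop_basis1}(a); the conclusion $f\in\I$ is then immediate because every completely reduced bracket-monomial appearing with a nonzero coefficient would have to be killed, but we have shown none survive, so $f\equiv 0$, i.e.\ $f\in\I$.
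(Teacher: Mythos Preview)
Your approach is correct and essentially identical to the paper's: reduce to multihomogeneous $f$ via Lemma~\ref{lemma_id}, pass to a linear combination of completely reduced bracket-monomials via Theorem~\ref{theo_canon}(a), dispose of the two-variable multidegrees by Theorem~\ref{theo_2var}, and kill the remaining coefficients by evaluating at tuples from $\{x,y\}$. The paper carries out exactly this program; for $\De=1^4$ it uses $f(x,x,y,x)$, $f(x,y,x,x)$, $f(y,x,x,x)$ to isolate the three $x_ax_b[x_c,x_4]$-coefficients and then $f(x,y,x,y)$, $f(x,x,y,y)$ for the two pure-bracket ones, and for $\De=(2,1,1)$ the completely reduced list is precisely $x_1^2[x_2,x_3]$, $x_1x_2[x_1,x_3]$, $[x_1,x_2][x_1,x_3]$ (your suspicion that $x_1x_3[x_1,x_2]$ fails the reduced condition $t_l\le s_1$ is correct), with substitutions $f(x,y,x)$, $f(x,x,y)$, $f(y,x,x)$. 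Note that your trial substitution $f(x,x,x,y)$ does \emph{not} isolate $\al_1$ (all three $x_ax_b[x_c,x_4]$ terms survive), which you noticed; the paper's choices above are the clean ones you were looking for.
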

\begin{proof}  Assume that $f\in \FX$ is a weak polynomial identity of degree $4$ for the pair $(\A_1,\V)$. By Lemma~\ref{lemma_id}, we can assume that $f$ is multihomogeneous of multidegree $\De$  with $|\De|=4$. By Theorem~\ref{theo_canon},  $f$ is equivalent to a linear combination of completely reduced bracket-monomials of multidegree $\De$. 

Assume $\De=1^4$. Using Example~\ref{ex_fine_canon} we can see that
\begin{align*}
f(x_1,\ldots,x_4) & \equiv  \al_1\, x_1 x_2 [x_3,x_4]+\al_2\, x_1 x_3 [x_2,x_4]  +  \al_3\, x_2 x_3 [x_1, x_4] \\
&+\be_1 [x_1,x_2][x_3,x_4]+\be_2 [x_1,x_3][x_2,x_4],  
\end{align*}
where $\al_i,\be_j\in\FF$. Since we have $0=f(x,x,y,x)=\al_1 x^2$, $0=f(x,y,x,x) = -\al_2 x^2$, and  $0=f(y,x,x,x)= -\al_3 x^2$, we thus obtain $\al_1=\al_2=\al_3=0$. Then equalities $0=f(x,y,x,y)=\be_1$ and $0=f(x,x,y,y)=\be_2$ imply that $f=0$.

Assume $\De=(2,1,1)$. Then
$$
f(x_1,x_2,x_3) \equiv \al_1\, x_1^2 [x_2,x_3]  +  \al_2\, x_1 x_2 [x_1,x_3]  +  \al_3\, [x_1,x_2][x_1,x_3],
$$
where $\al_1,\al_2,\al_3\in\FF$.  We have $\al_1=0$, since $0=f(x,y,x)=\al_1 x^2$. Thus, the equality $0=f(x,x,y)=-\al_2 x^2$ implies $\al_2=0$. Finally, since $0=f(y,x,x)=\al_3$, we obtain that $f=0$.

If $\De$ belongs to the list $\{(3,1),\; (2,2),\; (4)\}$, then Theorem~\ref{theo_2var} concludes the proof.
\end{proof}

\begin{propArt}\label{prop_deg5}
Any weak polynomial identity for the pair $(\A_1,\V)$ of degree $5$ lies in the L-ideal $\I$ generated by $\Ga_3$, $\St_3$, and $T_4$. 
\end{propArt}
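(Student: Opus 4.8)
The plan is to follow exactly the template established in the proofs of Theorem~\ref{theo_2var}, Lemma~\ref{lemma_deg3}, and Proposition~\ref{prop_deg4}: reduce to the multihomogeneous case via Lemma~\ref{lemma_id}, invoke Theorem~\ref{theo_canon} to write $f$ modulo $\I$ as a linear combination of completely reduced bracket-monomials of the fixed multidegree $\De$ with $|\De|=5$, and then argue that all coefficients must vanish by evaluating on suitable tuples from $\{x,y\}$ and using Proposition~\ref{prop_basis1}(a). The partitions of $5$ to be handled are $1^5$, $(2,1,1,1)$, $(2,2,1)$, $(3,1,1)$, and the two-variable cases $(3,2)$, $(4,1)$, $(5)$; the last three are dispatched immediately by Theorem~\ref{theo_2var}, so the work is in the first four.

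First I would treat $\De=1^5$. Here Example~\ref{ex_fine_canon} supplies the explicit list of nine completely reduced bracket-monomials, so $f$ is equivalent to a combination $\sum_{i=1}^4 \al_i\, x_{a_i}x_{b_i}x_{c_i}[x_{d_i},x_5] + \sum_{j=1}^5 \be_j\, x_{e_j}[x_{f_j},x_{g_j}][x_{h_j},x_5]$ with the indices read off from the example. To kill the four $\al_i$'s I would substitute tuples of the form $(\ldots)$ where exactly one variable is set to $y$ and the rest to $x$: in such a substitution every monomial with two brackets vanishes (a bracket of two $x$'s is zero, and at most one bracket can receive the single $y$, forcing the other to be $[x,x]=0$), while among the single-bracket monomials only the one whose bracket contains the distinguished index survives, yielding $\pm\al_i x^3=0$ after a suitable choice. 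Once $\al_1=\cdots=\al_4=0$, I would substitute tuples with exactly two variables set to $y$: a single-bracket monomial $x_{e}[x_{f},x_{g}][x_{h},x_5]$ evaluates to a nonzero scalar precisely when $\{f,g\}$ or $\{h,5\}$ is the pair of $y$-indices and the complementary bracket also gets one $y$ — more carefully, one needs the two $y$'s to sit one in each bracket. Choosing the five pairs of $y$-positions that isolate each $\be_j$ in turn (this is the analogue of $f(x,y,x,y)$ and $f(x,x,y,y)$ in the degree-$4$ proof) forces every $\be_j=0$, hence $f\equiv0$.

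Next, for $\De=(2,1,1,1)$, $\De=(2,2,1)$, and $\De=(3,1,1)$ I would first write down, using Definition~\ref{def_bracket_monomial} and the completely-reduced conditions, the finite list of completely reduced bracket-monomials of that multidegree (there are only a handful in each case: e.g. for $(3,1,1)$ one gets $x_1^3[x_2,x_3]$, $x_1^2 x_2[x_1,x_3]$, $x_1 x_2 x_3$-type absent, $x_1[x_1,x_2][x_1,x_3]$, and the completely reduced two-bracket monomials). Then the same ``set some variables equal to $x$ or $y$'' game applies: evaluate at $x_1=x$ and the remaining light variables at $x$ or $y$ so that the highest-monomial-weight term survives alone, conclude its coefficient is $0$, and peel off terms in decreasing monomial-weight and bracket-weight order exactly as in Lemma~\ref{lemma_semi_reduced} and Lemma~\ref{lemma_fine_reduced}. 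Each step yields an equation of the form $\pm(\text{coefficient})\cdot x^{k}y^{\ell}=0$ in $\A_1$, which forces that coefficient to vanish by Proposition~\ref{prop_basis1}(a).

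The main obstacle is purely bookkeeping: unlike degree $4$, degree $5$ has enough completely reduced bracket-monomials (nine in the multilinear case, several in each other case) that one must choose the evaluation tuples carefully so that each produces an equation isolating a single unknown, and one must make sure the list of completely reduced bracket-monomials in each non-multilinear multidegree is complete and correct. I expect no conceptual difficulty beyond what appears in the degree-$3$ and degree-$4$ proofs — the infinite field and Theorem~\ref{theo_canon} do all the structural work, and Proposition~\ref{prop_basis1}(a) closes each case — but the two-bracket multilinear terms require the slightly more delicate two-$y$ substitutions, and verifying that the five chosen pairs of $y$-positions genuinely triangularize the $\be_j$ system is the step I would double-check most carefully.
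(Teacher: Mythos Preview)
Your proposal is correct and follows essentially the same approach as the paper: reduce to multihomogeneous $f$ via Lemma~\ref{lemma_id}, apply Theorem~\ref{theo_canon}, dispatch the two-variable multidegrees by Theorem~\ref{theo_2var}, and for each of $\De=1^5,(2,1,1,1),(2,2,1),(3,1,1)$ kill the coefficients by evaluating at tuples from $\{x,y\}$ (first one $y$ to eliminate the single-bracket coefficients, then two $y$'s for the double-bracket ones). The paper carries out exactly this plan with explicit substitutions, so all that remains in your sketch is the routine bookkeeping you already anticipate.
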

\begin{proof}
Assume that $f\in \FX$ is a weak polynomial identity of degree $5$ for the pair $(\A_1,\V)$. By Lemma~\ref{lemma_id}, we can assume that $f$ is multihomogeneous of multidegree $\De$  with $|\De|=5$.  By Theorem~\ref{theo_canon},  $f$ is equivalent to a linear combination of completely reduced bracket-monomials of multidegree $\De$.

Assume $\De=1^5$. Using Example~\ref{ex_fine_canon} we can see that $f(x_1,\ldots,x_5)$ is equivalent to 
\begin{align*}
\al_1\, x_1 x_2 x_3 [x_4,x_5] + \al_2\, x_1 x_2 x_4 [x_3,x_5] + \al_3\, x_1 x_3 x_4 [x_2,x_5] + \al_4\, x_2 x_3 x_4 [x_1,x_5]\\
+ \be_1\, x_1 [x_2, x_3] [x_4,x_5] + \be_2\, x_1 [x_2, x_4] [x_3, x_5] + \be_3\, x_2 [x_1, x_3] [x_4, x_5] \\
+ \,\be_4\, x_2 [x_1, x_4] [x_3, x_5]+\be_5\, x_3 [x_1, x_4] [x_2, x_5]
\end{align*}
\noindent{}for some $\al_i,\be_j\in\FF$. Considering 
\[
f(x,x,x,y,x)=f(x,x,y,x,x)=f(x,y,x,x,x)=f(y,x,x,x,x)=0,
\]
we obtain $\al_1=\al_2=\al_3=\al_4=0$. Equalities 
\[
f(y,y,x,x,x)=f(y,x,y,x,x)=f(x,y,y,x,x)=f(y,x,x,y,x)=0
\]
imply that $\be_5=\be_4=\be_2=\be_3=0$. Finally, $0=f(x,y,x,y,x)=\be_1 x$ implies $\be_1=0$, i.e., $f=0$.

Assume $\De=(3,1,1)$. Then 
$$f(x_1,x_2,x_3)\equiv \al_1 x_1^3 [x_2,x_3] + \al_2 x_1^2x_2 [x_1,x_3] + \al_3 x_1[x_1,x_2][x_1,x_3]$$
for some $\al_i\in\FF$. We have $\al_1=0$, since $0=f(x,y,x)=\al_1 x^3$. Thus, the equality $0=f(x,x,y)=-\al_2x^3$ implies $\al_2=0$. Finally, since $0=f(y,x,x)=\al_3 y$, we obtain that $f=0$.

Assume $\De=(2,2,1)$. Then $f(x_1,x_2,x_3)$ is equivalent to
$$ \al_1 x_1^2 x_2 [x_2,x_3] + \al_2 x_1x_2^2 [x_1,x_3] + \al_3 x_1[x_1,x_2][x_2,x_3] + \al_4 x_2 [x_1,x_2][x_1,x_3]$$
for some $\al_i\in\FF$. We have $\al_1=\al_3=0$, since $0=f(x,y,x)= \al_1 x^2 y - \al_3 x$. Thus, the equality $0=f(x,x,y)=-\al_2x^3$ implies $\al_2=0$. Finally, since $0=f(y,x,x)=\al_4x$, we obtain that $f=0$.

Assume $\De=(2,1,1,1)$. Then $f(x_1,x_2,x_3,x_4)$ is equivalent to
\begin{align*}
\al_1 x_1^2 x_2[x_3,x_4] + \al_2 x_1^2 x_3 [x_2,x_4] + \al_3 x_1 x_2 x_3 [x_1,x_4]\\
+\,\be_1 x_1 [x_1,x_2] [x_3,x_4] + \be_2 x_1[x_1,x_3][x_2,x_4] +  \be_3 x_2[x_1,x_3][x_1,x_4]
\end{align*}
for some $\al_i,\be_i\in\FF$. Since $0=f(x,x,y,x)=\al_1 x^3$ and $0=f(x,y,x,x)=\al_2x^3$, we have $\al_1=\al_2=0$,. Thus, the equality $0=f(x,x,x,y)= - \al_3x^3$ implies $\al_3=0$. Considering $0=f(y,x,x,x)=\be_3 x$, we obtain $\be_3=0$. Finally, equalities $0=f(y,y,x,x)=\be_2 y$ and $0=f(y,x,y,x)=\be_1 y$ imply that $\be_1=\be_2=0$, i.e., $f=0$. 

If $\De$ belongs to the list $\{(4,1),\; (3,2),\; (5)\}$, then Theorem~\ref{theo_2var} concludes the proof.
\end{proof}

\section*{Acknowledgements}  The first author was supported by FAPESP 2018/23690-6.


\EditInfo{ February 23, 2024}{February 26, 2024}{Ivan Kaygorodov}

\end{document}